\renewcommand{\sectionmark}[1]%
{\markboth{#1}{}}
\theoremstyle{definition} 
\newtheorem{ex}{\bfseries \upshape Example}[section]
\newtheorem{dfn}[ex]{\bfseries \upshape Definition}
\newtheorem{rem}[ex]{\bfseries \upshape Remark}
\newtheorem{qu}[ex]{\bfseries \upshape Question}
\theoremstyle{plain}
\newtheorem{prop}[ex]{\bfseries \upshape Proposition}
\newtheorem{lem}[ex]{\bfseries \upshape Lemma}
\newtheorem{theo}{\bfseries \upshape Theorem}
\newtheorem{cor}{\bfseries \upshape Corollary}%\setlength{\textheight}{ 21cm}
\newenvironment{pr}{\begin{proof}[{\bf Proof}]}{\end{proof}}
\begin{document}
\title{{ \bf Uniform rigidity sequences for weak mixing diffeomorphisms on $\mathbb{T}^2$}}
\author{{\sc Philipp Kunde}}

\maketitle
 
\begin{abstract} 
In this paper we will show that if a sequence of natural numbers satisfies a certain growth rate, then there is a weak mixing diffeomorphism on $\mathbb{T}^2$ that is uniformly rigid with respect to that sequence. The proof is based on a quantitative version of the Anosov-Katok-method with explicitly defined conjugation maps and the constructions are done in the $C^{\infty}$-topology as well as in the real-analytic topology.
\end{abstract}

%\tableofcontents

\section{Introduction}

In \cite{GM} the notion of uniform rigidity was introduced as the topological analogue of rigidity in ergodic theory:
\begin{dfn}
\begin{enumerate}
	\item Let $T$ be an invertible measure-preserving transformation of a non-atomic probability space $\left(X, \mathcal{B}, \mu\right)$. $T$ is called rigid if there exists an increasing sequence $\left(n_m\right)_{m \in \mathbb{N}}$ of natural numbers such that the powers $T^{n_m}$ converge to the identity in the strong operator topology as $m \rightarrow \infty$, i.e. $\left\|f \circ T^{n_m} - f \right\|_2 \rightarrow 0$ as $m\rightarrow \infty$ for all $f \in L^2\left(X,\mu\right)$. So rigidity along a sequence $\left(n_m\right)_{m \in \mathbb{N}}$ implies $\mu\left(T^{n_m}A \cap A\right) \rightarrow \mu\left(A\right)$ as $m\rightarrow \infty$ for all $A \in \mathcal{B}$.
	\item Let $\left(X, \mathcal{B},\mu\right)$ be a Lebesgue probability space, where $X$ is a compact metric space with metric $d$. A measure-preserving homeomorphism $T: X\rightarrow X$ is called uniformly rigid if there exists an increasing sequence $\left(n_m\right)_{m \in \mathbb{N}}$ of natural numbers such that $d_u\left(T^{n_m}, id\right) \rightarrow 0$ as $m \rightarrow \infty$, where $d_u\left(S,T\right) = d_{0}\left(S,T\right)+d_{0}\left(S^{-1},T^{-1}\right)$ with $d_{0}\left(S,T\right) \coloneqq \sup_{x \in X}d\left(S\left(x\right), T\left(x\right)\right)$ is the uniform metric on the group of measure-preserving homeomorphisms on $X$.
\end{enumerate}
\end{dfn}
\begin{rem}
Uniform rigidity implies rigidity. In \cite{Ya}, example 3.1, an example of a rigid, but not uniformly rigid homeomorphism of $\mathbb{T}^2$ is presented. Thus, rigidity and uniform rigidity do not coincide on $\mathbb{T}^2$. 
\end{rem}
In \cite{JKLSS}, Proposition 4.1., it is shown that if an ergodic map is uniformly rigid, then any uniform rigidity sequence has zero density. Afterwards, the following question is posed:
\begin{qu} \label{qu:seq}
Which zero density sequences occur as uniform rigidity sequences for an ergodic transformation?
\end{qu}
Under some assumptions on the sequence $\left(n_m\right)_{m \in \mathbb{N}}$ measure-preserving transformations that are weak mixing and rigid along this sequence are constructed by a cutting and stacking method in \cite{BJLR}. Recall that a measure-preserving transformation $T:\left(X, \mathcal{B}, \mu\right)\rightarrow \left(X, \mathcal{B}, \mu\right)$ is called weak mixing if for all $A,B \in \mathcal{B}$: $\frac{1}{N}\sum^{N}_{n=1}\left| \mu\left(T^nA\cap B\right)-\mu\left(A\right)\cdot \mu\left(B\right)\right|\rightarrow 0$ as $N\rightarrow \infty$. \\
K. Yancey considered Question \ref{qu:seq} in the setting of homeomorphisms on $\mathbb{T}^2$ (see \cite{Ya}). Given a sufficient growth rate of the sequence she proved the existence of a weak mixing homeomorphism of $\mathbb{T}^2$ that is uniformly rigid with respect to this sequence: Let $\psi\left(x\right)=x^{x^3}$. If $\left(n_m\right)_{m \in \mathbb{N}}$ is an increasing sequence of natural numbers satisfying $\frac{n_{m+1}}{n_m}\geq \psi\left(n_m\right)$, there exists a weak mixing homeomorphism of $\mathbb{T}^2$ that is uniformly rigid with respect to $\left(n_m\right)_{m \in \mathbb{N}}$. \\
In this paper we start to examine this problem in the smooth category. The aimed diffeomorphisms are constructed with the aid of the so-called ``conjugation by approximation-method'' introduced in \cite{AK}. On every smooth compact connected manifold of dimension $m\geq 2$ admitting a non-trivial circle action $\mathcal{S} = \left\{S_t\right\}_{t \in \mathbb{S}^1}$ this method enables the construction of smooth diffeomorphisms with specific ergodic properties (e.g. weak mixing ones in \cite{AK}, section 5, or \cite{GK}) or non-standard smooth realizations of measure preserving systems (e.g. \cite{AK}, section 6, and \cite{FSW}). These diffeomorphisms are constructed as limits of conjugates $f_n = H_n \circ S_{\alpha_{n+1}} \circ H^{-1}_n$, where $\alpha_{n+1} = \alpha_n + \frac{1}{k_n \cdot l_n \cdot q^2_n} \in \mathbb{Q}$, $H_n = H_{n-1} \circ h_n$ and $h_n$ is a measure-preserving diffeomorphism satisfying $R_{\frac{1}{q_n}} \circ h_n = h_n \circ R_{\frac{1}{q_n}}$. In each step the conjugation map $h_n$ and the parameter $k_n$ are chosen such that the diffeomorphism $f_n$ imitates the desired property with a certain precision. Then the parameter $l_n$ is chosen large enough to guarantee closeness of $f_{n}$ to $f_{n-1}$ in the $C^{\infty}$-topology and so the convergence of the sequence $\left(f_n\right)_{n \in \mathbb{N}}$ to a limit diffeomorphism is provided. See \cite{FK} for more details and other results of this method. \\
As a starting point we use the construction of weak mixing diffeomorphisms on $\mathbb{T}^2$ undertaken in the real-analytic topology in \cite{FS} with the explicit conjugation maps
\begin{align*}
& \phi_n\left(\theta,r\right) = \left( \theta, r + q^2_n \cdot \cos\left(2\pi q_n \theta\right)\right), \\
& g_n\left(\theta,r\right) = \left( \theta + \left[nq^{\sigma}_{n}\right] \cdot r,r\right) \text{ with some $0<\sigma<\frac{1}{2}$}, \\
& h_n = g_n \circ \phi_n.
\end{align*} 
Here $\left[ \cdot \right]$ denotes the integer part of the number. Furthermore, let $\mathcal{R} = \left\{R_t\right\}_{t \in \mathbb{S}^1}$ denote the standard circle action on $\mathbb{T}^2$ comprising of the diffeomorphisms $R_t\left(\theta,r\right)= \left(\theta + t, r\right)$. Note that $h_n \circ R_{\frac{p_n}{q_n}} = R_{\frac{p_n}{q_n}} \circ h_n$. With the conjugation maps $H_n \coloneqq h_1 \circ ... \circ h_n$  we will define the diffeomorphism $f_n = H_n \circ R_{\alpha_{n+1}} \circ H^{-1}_{n}$. The sequence of rational numbers will be
\begin{equation*}
\alpha_{n+1}= \frac{p_{n+1}}{q_{n+1}}= \alpha_n - \frac{a_{n}}{q_n \cdot \tilde{q}_{n+1}},
\end{equation*}
where $a_n \in \mathbb{Z}$, $1\leq a_n \leq q_n$ is chosen in such a way that $\tilde{q}_{n+1} \cdot p_n \equiv a_n \mod q_n$. Therewith, we have $\left| \alpha_{n+1} - \alpha_n \right| \leq \frac{1}{\tilde{q}_{n+1}}$ and $\tilde{q}_{n+1} \cdot \alpha_{n+1} = \frac{\tilde{q}_{n+1} \cdot p_n}{q_n} - \frac{a_n}{q_n} \equiv 0 \mod 1$, which implies $f^{\tilde{q}_{n+1}}_{n} = \text{id}$. Hence, $\left(\tilde{q}_n\right)_{n \in \mathbb{N}}$ will be a rigidity sequence of $f= \lim_{n\rightarrow \infty} f_n$ under some restrictions on the closeness between $f_n$ and $f$ (see Remark \ref{rem:rigid}), which depend on the norms of the conjugation maps $H_i$ and the distances $\left|\alpha_{i+1} - \alpha_i \right| \leq \frac{1}{\tilde{q}_{i+1}}$ for every $i>n$. Thus, we have to estimate the norms $|||H_n|||_{n}$ carefully. This will yield the subsequent requirement on the number $\tilde{q}_{n+1}$ (see the end of section \ref{subsection:convA}): 
\begin{equation*}
\tilde{q}_{n+1} > \varphi_1\left(n\right) \cdot  \tilde{q}^{2 \cdot \left(\left(n+2\right) \cdot \left(n+1\right)^{n+1} +1\right)}_n,
\end{equation*}
where $\varphi_1\left(n\right) \coloneqq 2^{n} \cdot \left(n+1\right)! \cdot \left(\left(n + 2\right)!\right)^{\left(n+2\right)^{n-2} \cdot \left(n +1\right)} \cdot \left(2 \pi n\right)^{\left(n+2\right) \cdot \left(n+1\right)^{n+1}}$.
This is a sufficient condition on the growth rate of the rigidity sequence $\left(\tilde{q}_n\right)_{n \in \mathbb{N}}$ and we prove that $f$ is weak mixing using a criterion similar to that deduced in \cite{FS} (see section \ref{section:wm}). Consequently we obtain: 
\begin{theo} \label{theo:main}
Let $\varphi_1\left(n\right) \coloneqq 2^{n} \cdot \left(n+1\right)! \cdot \left(\left(n + 2\right)!\right)^{\left(n+2\right)^{n-2} \cdot \left(n +1\right)} \cdot \left(2 \pi n\right)^{\left(n+2\right) \cdot \left(n+1\right)^{n+1} }$. If $\left(\tilde{q}_n\right)_{n \in \mathbb{N}}$ is a sequence of natural numbers satisfying
\begin{equation*}
\tilde{q}_{n+1} \geq \varphi_1\left(n\right) \cdot \tilde{q}^{2 \cdot \left(\left(n+2\right) \cdot \left(n+1\right)^{n+1} +1\right)}_n,
\end{equation*}
there exists a weak mixing $C^{\infty}$-diffeomorphism of $\mathbb{T}^2$ that is uniformly rigid with respect to $\left(\tilde{q}_n\right)_{n \in \mathbb{N}}$.
\end{theo}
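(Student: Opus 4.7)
The plan is to construct the diffeomorphism $f$ as an explicit Anosov--Katok limit $f = \lim_{n\to\infty} f_n$ with $f_n = H_n \circ R_{\alpha_{n+1}} \circ H_n^{-1}$, using precisely the conjugation maps $h_n = g_n \circ \phi_n$ described in the excerpt and the rational numbers $\alpha_{n+1}$ with denominators $q_{n+1} = k_n l_n q_n^2 \tilde q_{n+1}$ (or a similar factorization) built from the prescribed $\tilde q_{n+1}$. The construction is inductive: at stage $n$ we are given $q_n, \alpha_n, H_{n-1}$ and we first produce $h_n$ (and hence $H_n = H_{n-1}\circ h_n$) from the explicit formulas, then choose $a_n$ so that $\tilde q_{n+1}\cdot \alpha_{n+1}\in\mathbb{Z}$ (forcing $f_n^{\tilde q_{n+1}}=\mathrm{id}$), and finally absorb the remaining free parameters (playing the role of $k_n, l_n$) into the required size of $\tilde q_{n+1}$.

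The first technical step is to bound the $C^n$-norm $|||H_n|||_n$ of the conjugation. Since $\phi_n$ involves derivatives of order $q_n$ (through $\cos(2\pi q_n\theta)$ and its $n$-fold derivatives, giving factors $(2\pi q_n)^n$ multiplied by $q_n^2$) and $g_n$ contributes a factor $[nq_n^\sigma]$, an induction on $n$ yields an upper bound of the form $|||H_n|||_n \le C(n)\cdot \tilde q_n^{(n+2)(n+1)^{n+1}}$ for an explicit combinatorial factor $C(n)$ matching the function $\varphi_1(n)$. The closeness $d_{C^n}(f_n,f_{n-1})$ can then be bounded by $|\alpha_{n+1}-\alpha_n|\cdot|||H_n|||_n^2 \le \frac{|||H_n|||_n^2}{\tilde q_{n+1}}$, so the hypothesis $\tilde q_{n+1}\ge \varphi_1(n)\tilde q_n^{2((n+2)(n+1)^{n+1}+1)}$ guarantees $d_{C^n}(f_n,f_{n-1})\le 2^{-n}$, which is exactly what is needed for $(f_n)$ to converge in $C^\infty$ to a limit $f\in\mathrm{Diff}^\infty(\mathbb{T}^2)$ preserving Lebesgue measure (the inverses are handled symmetrically, and the same control transfers to $f_n^{-1}$).

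Uniform rigidity with respect to $(\tilde q_n)$ is then essentially free from the construction. Because $f_n^{\tilde q_{n+1}}=\mathrm{id}$, for every $m\ge n+1$ we can telescope
\begin{equation*}
d_u\bigl(f^{\tilde q_{n+1}},\mathrm{id}\bigr) \le 2\,\tilde q_{n+1}\sum_{m\ge n+1} d_u(f_m,f_{m-1}),
\end{equation*}
and the tail bound from the $C^0$ part of the estimate above (tightened by one more factor in the choice of $\tilde q_{m+1}$, which is already present in the hypothesis through the extra ``$+1$'' in the exponent) makes the right-hand side tend to $0$ as $n\to\infty$. Thus $d_u(f^{\tilde q_{n+1}},\mathrm{id})\to 0$, which is the desired uniform rigidity.

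The main obstacle, and where most of the work lies, is the weak mixing. I would adapt the $L^2$-criterion from \cite{FS}: it suffices to find, for every cylinder set $A,B$ in a generating algebra and every $\varepsilon>0$, integers $m_n$ and sufficiently large $n$ so that $|\mu(f_n^{m_n}A\cap B)-\mu(A)\mu(B)|<\varepsilon$ together with $d_{C^0}(f^{m_n},f_n^{m_n})<\varepsilon$. The first inequality is verified by exploiting the shear $g_n$: the image $H_n([0,1]\times I)$ of a small horizontal strip of height $\sim 1/q_n$ under $h_n$ is a long thin parallelogram that, after applying $R_{\alpha_{n+1}}^{m_n}$ for a well-chosen $m_n$ of order $q_n$, wraps around $\mathbb{T}^2$ and becomes approximately uniformly distributed because the slope $[nq_n^\sigma]$ grows to infinity; the cosine perturbation in $\phi_n$ controls the transverse direction similarly. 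The delicate point is that this equidistribution must be obtained while keeping $m_n$ small enough that $m_n \cdot d_{C^0}(f,f_n)$ remains small, which forces the choice of $q_n$ (and hence the free parameter implicit between $\tilde q_n$ and $\tilde q_{n+1}$) to balance the weak mixing precision against the rigidity speed --- this is precisely where the exponent $(n+2)(n+1)^{n+1}$ in $\varphi_1$ comes from, and verifying that the balance is compatible with the given growth rate on $(\tilde q_n)$ is the real content of the theorem.
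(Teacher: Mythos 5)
Your plan follows the paper's own route essentially step for step: the same explicit conjugation maps $h_n = g_n\circ\phi_n$, the same choice of $\alpha_{n+1}$ forcing $f_n^{\tilde q_{n+1}}=\mathrm{id}$, convergence via norm estimates on $H_n$, rigidity from $d_0\bigl(f^{\tilde q_{n+1}},f_n^{\tilde q_{n+1}}\bigr)\to 0$, and weak mixing via the Fayad--Saprykina criterion. However, two of your intermediate estimates are not correct as stated. First, the bound $d_{C^n}(f_n,f_{n-1})\le|\alpha_{n+1}-\alpha_n|\cdot|||H_n|||_n^2$ fails for $n\ge 2$: differentiating $H_n\circ R_\alpha\circ H_n^{-1}$ $n$ times produces summands containing one derivative of $H_n$ and up to $n$ derivatives of $H_n^{-1}$, so the correct power is $|||H_n|||_{n+1}^{\,n+1}$, not a square. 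Your final exponent happens to match the hypothesis only because your claimed norm bound $|||H_n|||_n\lesssim\tilde q_n^{(n+2)(n+1)^{n+1}}$ is itself inflated (the sharp bound is roughly the $(n+1)$-th root of that); in the paper the factor $2$ in the exponent comes from $q_n\le\tilde q_n^2$, not from squaring. With the correct power $n+1$ applied to your norm bound the requirement would exceed the stated hypothesis, so the compensating errors must be untangled before the accounting closes. Second, the telescoping $d_u\bigl(f^{\tilde q_{n+1}},\mathrm{id}\bigr)\le 2\tilde q_{n+1}\sum_{m\ge n+1}d_u(f_m,f_{m-1})$ is not a valid general inequality for iterates in the uniform metric (it would need Lipschitz control of the iterates); what saves the argument is the identity $f_i^m=H_i\circ R_{m\alpha_{i+1}}\circ H_i^{-1}$ and $f_{i-1}^m=H_i\circ R_{m\alpha_i}\circ H_i^{-1}$, which gives $d_0(f_i^m,f_{i-1}^m)\le|||H_i|||_1\cdot m\cdot|\alpha_{i+1}-\alpha_i|$ directly, with the factor $m\le q_{n+1}\le q_i$ absorbed by the extra $q_n$ (your ``$+1$'' in the exponent) built into the closeness condition on $|\alpha_{i+1}-\alpha_i|$.

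The larger gap is the weak-mixing verification, which you correctly identify as the real content but leave at the level of a heuristic. To make it work one needs: (i) the quantitative condition $\|DH_{n-1}\|_0\le q_n^{1/2}$, which imposes its own growth requirement $q_{n+1}\gtrsim n^4q_n^9$ and must be checked against the hypothesis; (ii) a specific choice of $m_n$ --- not ``of order $q_n$'' but the smallest $m\le q_{n+1}$ with $m\cdot\frac{q_np_{n+1}}{q_{n+1}}$ within $\frac{q_n^2}{q_{n+1}}$ of $\frac12$ modulo $1$, whose existence is a small lemma about the orbit of $\frac{p_{n+1}}{\tilde q_{n+1}}$; and (iii) a partial decomposition into horizontal intervals avoiding the set where $\psi_n'$ is small, together with a uniform-stretching estimate showing that $\Phi_n=\phi_n\circ R^{m_n}_{\alpha_{n+1}}\circ\phi_n^{-1}$ distributes each interval over the vertical circle with error $\frac1n$. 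Note also that in this scheme the vertical equidistribution is produced by $\phi_n$ alone, while the shear $g_n$ enters only through the criterion (converting vertical distribution into equidistribution of the partition $\nu_n$); your description somewhat conflates the two roles. None of these steps is an obstruction --- they all go through --- but they are where the stated growth rate is actually consumed, and the proposal does not carry them out.
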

In section \ref{subsection:prKor} we conclude a rougher but more handsome statement: 
\begin{cor} \label{cor:Kor}
If $\left(\tilde{q}_n\right)_{n \in \mathbb{N}}$ is a sequence of natural numbers satisfying $\tilde{q}_1 \geq 108 \pi$ and $\tilde{q}_{n+1} \geq \tilde{q}^{\tilde{q}_n}_n$, then there exists a weak mixing $C^{\infty}$-diffeomorphism of $\mathbb{T}^2$ that is uniformly rigid with respect to $\left(\tilde{q}_n\right)_{n \in \mathbb{N}}$.
\end{cor}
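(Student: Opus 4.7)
The plan is to deduce Corollary \ref{cor:Kor} directly from Theorem \ref{theo:main} by verifying that the simpler hypothesis $\tilde{q}_{n+1}\geq \tilde{q}_n^{\tilde{q}_n}$, together with $\tilde{q}_1\geq 108\pi$, forces the more intricate growth condition
\begin{equation*}
\tilde{q}_{n+1}\geq \varphi_1(n)\cdot \tilde{q}_n^{2\left((n+2)(n+1)^{n+1}+1\right)}
\end{equation*}
for every $n\in\mathbb{N}$. Once this is in hand, Theorem \ref{theo:main} immediately produces the desired weak mixing, uniformly rigid $C^{\infty}$-diffeomorphism; no new dynamical construction is required.

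The first step is algebraic. I would divide the target inequality by $\tilde{q}_n^{2((n+2)(n+1)^{n+1}+1)}$ and invoke the recursion of the corollary to reduce everything to
\begin{equation*}
\tilde{q}_n^{\tilde{q}_n-2\left((n+2)(n+1)^{n+1}+1\right)}\geq \varphi_1(n).
\end{equation*}
Taking logarithms, it suffices to show that the exponent on the left, multiplied by $\log\tilde{q}_n$, dominates $\log\varphi_1(n)$. Since $\log\varphi_1(n)$ grows only as fast as $(n+2)(n+1)^{n+1}\log(2\pi n)$ plus lower-order contributions, while the recursion $\tilde{q}_{n+1}\geq \tilde{q}_n^{\tilde{q}_n}$ produces tower-type growth of $\tilde{q}_n$, the inequality becomes effectively tautological once $\tilde{q}_n$ has outrun the exponents appearing in $\varphi_1$.

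The argument then runs by induction on $n$. The inductive step is essentially free: if $\tilde{q}_n$ already exceeds, say, $4(n+2)(n+1)^{n+1}$, then $\tilde{q}_{n+1}\geq \tilde{q}_n^{\tilde{q}_n}$ grows fast enough both to repeat this bound at level $n+1$ and to absorb the slowly increasing $\log\varphi_1(n+1)$. The real work lies in the base case $n=1$, where $2((n+2)(n+1)^{n+1}+1)=26$ and
\begin{equation*}
\varphi_1(1)=4\cdot 6^{2/3}\cdot (2\pi)^{12},
\end{equation*}
so the required inequality reads $\tilde{q}_1^{\tilde{q}_1-26}\geq 4\cdot 6^{2/3}\cdot (2\pi)^{12}$. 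With $\tilde{q}_1\geq 108\pi>339$ the exponent exceeds $313$ and the comparison is immediate; the constant $108\pi$ in the statement is presumably engineered precisely so that this check goes through cleanly.

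The main, and really only non-routine, obstacle is bookkeeping the constants through the first few values of $n$, where $\tilde{q}_n$ has not yet entered the tower-dominated regime but $\varphi_1(n)$ already carries the heavy factors $((n+2)!)^{(n+2)^{n-2}(n+1)}$ and $(2\pi n)^{(n+2)(n+1)^{n+1}}$. Past that initial threshold the entire argument collapses to a one-line comparison of a tower function with an iterated exponential in $n$, and the conclusion of Theorem \ref{theo:main} can be applied verbatim.
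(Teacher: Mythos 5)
Your proposal is correct and follows essentially the same route as the paper: both reduce the corollary to verifying the hypothesis of Theorem \ref{theo:main}, first establishing by induction a lower bound of the form $\tilde{q}_n \geq c\,(n+2)^{n+2}$-type (the paper uses $\tilde{q}_n \geq 4\pi n(n+2)^{n+2}$, seeded by $\tilde{q}_1\geq 108\pi$), and then splitting the exponent in $\tilde{q}_n^{\tilde{q}_n}$ so that one factor absorbs $\varphi_1(n)$ and the other retains $\tilde{q}_n^{2((n+2)(n+1)^{n+1}+1)}$. The only cosmetic difference is that the paper carries out the absorption of $\varphi_1(n)$ explicitly for general $n$ rather than emphasizing the base case, but the bookkeeping you describe is exactly what it does.
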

We note that our requirement on the growth rate is less restrictive than the mentioned condition in \cite{Ya}, Theorem 1.5.. In fact, the proof in \cite{Ya} shows that a condition of the form $\frac{n_{m+1}}{n_m} \geq n^{4n^2_m+20}_m$ is sufficient for her construction of a weakly mixing homeomorphism, which is uniformly rigid along $\left(n_m\right)_{m \in \mathbb{N}}$. Our requirement on the growth rate is still weaker. \\ 
By the same approach we consider the problem in the real-analytic topology. In this setting we will deduce the following sufficient condition on the growth rate of the rigidity sequence $\left(\tilde{q}_n\right)_{n \in \mathbb{N}}$:
\begin{theo} \label{theo:main2}
Let $\rho>0$. If $\left(\tilde{q}_n\right)_{n \in \mathbb{N}}$ is a sequence of natural numbers satisfying $\tilde{q}_1 \geq \rho+1$ and
\begin{equation*}
\tilde{q}_{n+1} \geq 2^n \cdot 64\pi^2 \cdot n^2 \cdot \tilde{q}^{14}_n \cdot \exp\left(4\pi \cdot n \cdot \tilde{q}^6_n \cdot \exp\left(2 \pi \cdot \tilde{q}^4_n \cdot \left(1+n \cdot \tilde{q}_n\right)\right)\right),
\end{equation*}
there exists a weak mixing Diff$^{\omega}_{\rho}$-diffeomorphism of $\mathbb{T}^2$ that is uniformly rigid along the sequence $\left(\tilde{q}_n\right)_{n \in \mathbb{N}}$.
\end{theo}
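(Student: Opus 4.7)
The plan is to follow the Anosov--Katok scheme of Theorem \ref{theo:main} line by line, but now carried out in the real-analytic category $\text{Diff}^{\omega}_{\rho}$. The maps $\phi_n, g_n, h_n, H_n, f_n$ and the rational sequence $\alpha_{n+1} = \alpha_n - a_n/(q_n \tilde{q}_{n+1})$ are defined by the same formulas, so that $f_n^{\tilde{q}_{n+1}} = \text{id}$ is automatic; the only freedom at stage $n$ is to pick $q_n$ (equivalently the auxiliary parameters $k_n, l_n$) inside the prescribed arithmetic progression of denominators fixed by the datum $\tilde{q}_{n+1}$. Three things must be verified: that $(f_n)$ converges to a limit $f$ in $\text{Diff}^{\omega}_{\rho}$, that $f$ is uniformly rigid along $(\tilde{q}_n)$, and that $f$ is weak mixing.

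\textbf{Step 1 (analytic norm of $H_n$).} Working with the standard $\|\cdot\|_{\rho}$--norm on the complex strip $\{|\text{Im}\,\theta|, |\text{Im}\,r| \leq \rho\}$, the key point is that $\phi_n$ contains $q_n^2 \cos(2\pi q_n \theta)$, whose norm on a strip of width $\rho'$ is of order $q_n^2 \exp(2\pi q_n \rho')$. Hence $\phi_n$ pushes such a strip into a strip of width $\rho' + q_n^2 \exp(2\pi q_n \rho')$ in the $r$--direction, while $g_n$ contributes only a polynomial factor. Iterating through $H_n = h_1 \circ \cdots \circ h_n$ forces a nested family of shrinking widths $\rho > \rho_1 > \rho_2 > \cdots$ along which each successive composition remains holomorphic, and produces a tower-of-exponentials bound on $\|H_n\|_{\rho}$ of precisely the shape $\exp\bigl(4\pi n \tilde{q}_n^6 \exp(2\pi \tilde{q}_n^4(1+n\tilde{q}_n))\bigr)$ appearing on the right-hand side of the hypothesis.

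\textbf{Step 2 (convergence and rigidity).} A standard AK estimate yields $\|f_n - f_{n-1}\|_{\rho} \leq C \cdot \|H_n\|_{\rho}^{2} \cdot |\alpha_{n+1}-\alpha_n| \leq C \cdot \|H_n\|_{\rho}^{2}/\tilde{q}_{n+1}$, and the growth condition on $\tilde{q}_{n+1}$ is precisely what is needed to render this summable, so that $f_n \to f$ in $\text{Diff}^{\omega}_{\rho}$. For uniform rigidity, $f_n^{\tilde{q}_{n+1}} = \text{id}$ and the elementary bound $d_0(f^{\tilde{q}_{n+1}}, \text{id}) \leq \tilde{q}_{n+1} \cdot \sup_{m \geq n+1}\|f_m - f_{m-1}\|_{C^1}$, in the spirit of Remark \ref{rem:rigid}, then forces $d_u(f^{\tilde{q}_n}, \text{id}) \to 0$.

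\textbf{Step 3 (weak mixing) and the main obstacle.} Since the $h_n$ are the very maps used in \cite{FS}, the weak mixing criterion and the accompanying geometric estimates of that paper transfer verbatim; the only check is that the double-exponential growth of $\tilde{q}_{n+1}$ leaves enough slack inside $q_{n+1}$ to implement the mixing approximations at stage $n$, which is immediate. The real work is Step 1: the analytic norm $\|\phi_n\|_{\rho}$ depends jointly on the frequency $q_n$ and on the width of the incoming strip, and the imaginary parts expand exponentially at every composition, so carefully bookkeeping the decreasing widths $\rho_i$ in tandem with the norms $\|H_i\|_{\rho_i}$ is the technical heart of the proof. Once this recursion is established, the hypothesis on $\tilde{q}_{n+1}$ is seen to supply exactly the room that Step 2 requires.
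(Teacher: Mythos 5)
Your architecture is the paper's: the same conjugation maps, the same choice of $\alpha_{n+1}$ forcing $f_n^{\tilde{q}_{n+1}}=\text{id}$, convergence plus rigidity plus the weak mixing criterion of Proposition \ref{prop:wm}. But two points in the analytic part would fail as written. First, the width bookkeeping in Step 1 runs in the wrong direction: the $h_n$ are entire, so there is no issue of shrinking domains of holomorphy; the problem is the growth of the \emph{image} strips. The paper sets $\rho_n=\left\|H_n^{-1}\right\|_{\rho}$, so that $H_n^{-1}\left(A^{\rho}\right)\subseteq A^{\rho_n}$ with $\rho_n$ \emph{increasing} doubly exponentially through the recursion $\tilde{\rho}_n=2q_n^2\exp\left(2\pi q_n\tilde{\rho}_{n-1}\left(1+\left[nq_n^{\sigma}\right]\right)\right)$; a nested family of shrinking widths $\rho>\rho_1>\rho_2>\cdots$ is the wrong picture and would not yield convergence in Diff$^{\omega}_{\rho}$ for the fixed $\rho$.

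Second, and more seriously, the ``standard AK estimate'' $\left\|f_n-f_{n-1}\right\|_{\rho}\leq C\left\|H_n\right\|_{\rho}^2\left|\alpha_{n+1}-\alpha_n\right|$ is the $C^k$ estimate (Lemma \ref{lem:conj}) transplanted to the analytic norm, where it is not valid: to compare $H_n\circ R_{\alpha_{n+1}}\circ H_n^{-1}$ with $H_n\circ R_{\alpha_n}\circ H_n^{-1}$ on $A^{\rho}$ one must control the derivative of the outer map on the image $R_{\cdot}\circ H_n^{-1}\left(A^{\rho}\right)$, which lies in the much larger strip $A^{\rho_n+1}$, and $\left\|DH_n\right\|_{\rho_n+1}$ is of a doubly exponential order unrelated to $\left\|H_n\right\|_{\rho}^2$. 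The paper's substitute is the inductive Lemma \ref{lem:prep}, which peels off one $h_k$ at a time and accumulates the product $\left\|Dh_1\right\|_{\rho_1+1}\cdots\left\|Dh_{n-1}\right\|_{\rho_{n-1}+1}$, combined in Lemma \ref{lem:convc} with the identity $\cos x-\cos y=2\sin\left(\frac{x+y}{2}\right)\sin\left(\frac{y-x}{2}\right)$ to extract the factor $\left|\alpha_{n+1}-\alpha_n\right|$ from the innermost difference $h_n\circ R^m_{\alpha_{n+1}}\circ h_n^{-1}-R^m_{\alpha_n}$ measured in $\left\|\cdot\right\|_{\rho_{n-1}}$. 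It is precisely this product together with the factor $\exp\left(4\pi n q_n^{1+\sigma}\tilde{\rho}_{n-1}\right)$ that produces the double exponential in the hypothesis on $\tilde{q}_{n+1}$, so without the chain argument the growth condition cannot be matched to the convergence requirement. Your Step 3 is essentially right: the weak mixing verification (Section \ref{section:wm}) is shared with the smooth case and only needs conditions such as $q_{n+1}\geq 64\pi^2 n^2 q_n^8$, which the hypothesis comfortably implies.
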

Again, we derive from this a more convenient statement in section \ref{subsection:prKorA}:
\begin{cor} \label{cor:KorA}
If $\left(\tilde{q}_n\right)_{n \in \mathbb{N}}$ is a sequence of natural numbers satisfying $\tilde{q}_1 \geq \left(\rho+1\right) \cdot 2^7 \cdot \pi^2$ and $\tilde{q}_{n+1} \geq \tilde{q}^{15}_n \cdot \exp\left(\tilde{q}^7_n \cdot \exp\left(\tilde{q}^6_n\right)\right)$, then there exists a weak mixing Diff$^{\omega}_{\rho}$-diffeomorphism of $\mathbb{T}^2$ that is uniformly rigid with respect to $\left(\tilde{q}_n\right)_{n \in \mathbb{N}}$.
\end{cor}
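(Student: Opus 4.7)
The plan is to derive Corollary \ref{cor:KorA} directly from Theorem \ref{theo:main2} by showing that the simpler hypothesis on $(\tilde{q}_n)_{n \in \mathbb{N}}$ implies the more elaborate growth condition of that theorem. The initial condition is immediate, since $\tilde{q}_1 \geq (\rho+1) \cdot 2^7 \pi^2 \geq \rho + 1$, so only the recursive inequality needs attention.

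First I would establish, by induction on $n$, the auxiliary bound
\[ \tilde{q}_n \geq 64 \pi^2 \cdot 2^n \cdot n^2. \]
The base case $n=1$ follows immediately from $\tilde{q}_1 \geq (\rho+1) \cdot 2^7 \pi^2 \geq 128 \pi^2$. For the inductive step, the corollary's recursion already gives $\tilde{q}_{n+1} \geq \tilde{q}_n^{15}$, which is vastly larger than $64 \pi^2 \cdot 2^{n+1} \cdot (n+1)^2$ once the induction hypothesis is applied. In particular this bound forces $4\pi n \leq \tilde{q}_n$ for every $n$.

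Using $4\pi n \leq \tilde{q}_n$, the inner exponential appearing in the theorem's recursion is dominated by
\[ 2\pi \tilde{q}_n^4 (1+n\tilde{q}_n) \leq 4\pi n \tilde{q}_n^5 \leq \tilde{q}_n^6, \]
so $\exp\bigl(2\pi \tilde{q}_n^4(1+n\tilde{q}_n)\bigr) \leq \exp(\tilde{q}_n^6)$. Multiplying by the outer polynomial and again using $4\pi n \leq \tilde{q}_n$,
\[ 4\pi n \tilde{q}_n^6 \exp\bigl(2\pi \tilde{q}_n^4(1+n\tilde{q}_n)\bigr) \leq \tilde{q}_n^7 \exp(\tilde{q}_n^6), \]
so the double-exponential factor from the corollary's recursion dominates the corresponding factor in the theorem. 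The polynomial prefactor is handled by $\tilde{q}_n^{15} = \tilde{q}_n \cdot \tilde{q}_n^{14} \geq 64 \pi^2 \cdot 2^n \cdot n^2 \cdot \tilde{q}_n^{14}$. Combining these two estimates yields precisely the recursion of Theorem \ref{theo:main2}, which then produces the desired real-analytic weak mixing diffeomorphism that is uniformly rigid along $(\tilde{q}_n)_{n \in \mathbb{N}}$.

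The only point demanding care is the calibration of the auxiliary bound: it must be strong enough to simultaneously absorb the $n$-dependence inside the exponential (via $4\pi n \tilde{q}_n^5 \leq \tilde{q}_n^6$) and the prefactor $2^n \cdot 64 \pi^2 \cdot n^2$, while still being comfortably implied by the corollary's hypothesis. No ingredient beyond the monotonicity of $\exp$ and Theorem \ref{theo:main2} itself is required.
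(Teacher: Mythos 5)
Your proposal is correct and follows essentially the same route as the paper: an induction establishing $\tilde{q}_n \geq 64\pi^2 \cdot 2^n \cdot n^2$ (identical to the paper's bound $2^{n+6}n^2\pi^2$), followed by absorbing the prefactor $2^n \cdot 64\pi^2 n^2$ and the $n$-dependence in the exponentials into single powers of $\tilde{q}_n$ so that the hypothesis of Theorem \ref{theo:main2} is verified. The estimates $2\pi\tilde{q}_n^4(1+n\tilde{q}_n) \leq 4\pi n\tilde{q}_n^5 \leq \tilde{q}_n^6$ and $4\pi n\tilde{q}_n^6\exp(\cdot) \leq \tilde{q}_n^7\exp(\tilde{q}_n^6)$ are exactly the paper's, read in the opposite direction.
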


\section{Definitions and notations}
In this chapter we want to introduce advantageous definitions and notations. In particular, we discuss topologies on the space of diffeomorphisms on $\mathbb{T}^2$. 

\subsection{$C^{\infty}$-topology}

For defining explicit metrics on Diff$^k\left(\mathbb{T}^2\right)$ and in the following the subsequent notations will be useful:
\begin{dfn}
\begin{enumerate}
	\item For a sufficiently differentiable function $f: \mathbb{R}^2 \rightarrow \mathbb{R}$ and a multiindex $\vec{a} = \left(a_1,a_2\right) \in \mathbb{N}^2_0$
\begin{equation*}
D_{\vec{a}}f := \frac{\partial^{\left|\vec{a}\right|}}{\partial x_1^{a_1}\partial x_2^{a_2}} f,
\end{equation*}
where $\left|\vec{a}\right| = a_1 + a_2$ is the order of $\vec{a}$.
  \item For a continuous function $F: \left(0,1\right)^2 \rightarrow \mathbb{R}$
\begin{equation*}
\left\|F\right\|_0 := \sup_{z \in \left(0,1\right)^2} \left|F\left(z\right)\right|.
\end{equation*}
\end{enumerate}
\end{dfn}
For $f,g \in \text{Diff}^k\left(\mathbb{T}^2\right)$ let $F,G: \mathbb{R}^2 \rightarrow \mathbb{R}^2$ denote their lifts. Furthermore, for a function $F: \mathbb{R}^2 \rightarrow \mathbb{R}^2$ we denote by $\left[F\right]_i$ the $i$-th coordinate function.
\begin{dfn}
\begin{enumerate}
	\item For $f,g \in \text{Diff}^k\left(\mathbb{T}^2\right)$ we define
\begin{equation*}
\tilde{d}_0\left(f,g\right) = \max_{i=1,2} \left\{ \inf_{p \in \mathbb{Z}} \left\| \left[F - G\right]_i + p\right\|_0\right\}
\end{equation*}
as well as
\begin{equation*}
\tilde{d}_k\left(f,g\right) = \max \left\{ \tilde{d}_0\left(f,g\right), \left\|D_{\vec{a}}\left[F-G\right]_i\right\|_0 \ : \ i=1,2 \ , \ 1\leq \left|\vec{a}\right| \leq k \right\}.
\end{equation*}
  \item Using the definitions from 1. we define for $f,g \in \text{Diff}^k\left(\mathbb{T}^2\right)$:
  \begin{equation*}
  d_k\left(f,g\right) = \max \left\{ \tilde{d}_k\left(f,g\right) \ , \ \tilde{d}_k\left(f^{-1},g^{-1}\right)\right\}.
  \end{equation*}
\end{enumerate}
\end{dfn}

Obviously $d_k$ describes a metric on Diff$^k\left(\mathbb{T}^2\right)$ measuring the distance between the diffeomorphisms as well as their inverses. As in the case of a general compact manifold the following definition connects to it:

\begin{dfn}
\begin{enumerate}
	\item A sequence of Diff$^{\infty}\left(\mathbb{T}^2\right)$-diffeomorphisms is called convergent in Diff$^{\infty}\left(\mathbb{T}^2\right)$ if it converges in Diff$^k\left(\mathbb{T}^2\right)$ for every $k \in \mathbb{N}$.
	\item On Diff$^{\infty}\left(\mathbb{T}^2\right)$ we declare the following metric
	\begin{equation*}
	d_{\infty}\left(f,g\right) = \sum^{\infty}_{k=1} \frac{d_k\left(f,g\right)}{2^k \cdot \left(1 + d_k\left(f,g\right)\right)}.
	\end{equation*}
\end{enumerate}
\end{dfn}

It is a general fact that Diff$^{\infty}\left(\mathbb{T}^2\right)$ is a complete metric space with respect to this metric $d_{\infty}$. \\
Moreover, we add the adjacent notation:
\begin{dfn}
Let $f \in \text{Diff}^k\left(\mathbb{T}^2\right)$ with lift $F$ be given. Then
\begin{equation*}
\left\| Df \right\|_0 := \max_{i,j \in \left\{1,2\right\}} \left\| D_j \left[F\right]_i \right\|_0
\end{equation*}
and
\begin{equation*}
||| f ||| _k := \max \left\{ \left\|D_{\vec{a}} \left[F\right]_i \right\|_0 , \left\|D_{\vec{a}} \left(\left[F^{-1}\right]_{i}\right)\right\|_0 \ : \ i = 1,2, \ \vec{a} \text{ multiindex with } 0\leq \left| \vec{a}\right| \leq k \right\}.
\end{equation*}
\end{dfn}

\subsection{Analytic topology}
Real-analytic diffeomorphisms of $\mathbb{T}^2$ homotopic to the identity have a lift of type
\begin{equation*}
F\left(\theta,r\right) = \left( \theta + f_1\left(\theta,r\right), r + f_2\left(\theta,r\right)\right),
\end{equation*}
where the functions $f_i: \mathbb{R}^2 \rightarrow \mathbb{R}$ are real-analytic and $\mathbb{Z}^2$-periodic for $i=1,2$. For these functions we introduce the subsequent definition:
\begin{dfn}
For any $\rho > 0$ we consider the set of real-analytic $\mathbb{Z}^2$-periodic functions on $\mathbb{R}^2$, that can be extended to a holomorphic function on $A^{\rho} \coloneqq \left\{ \left(\theta, r\right) \in \mathbb{C}^2 \; : \; \left|\text{im}\theta\right|< \rho, \;\left|\text{im}r\right|< \rho\right\}$. 
\begin{enumerate}
	\item For these functions let $\left\|f\right\|_{\rho} \coloneqq \sup_{\left(\theta,r\right) \in A^{\rho}} \left|f\left(\theta,r\right)\right|$.
	\item The set of these functions satisfying the condition $\left\|f\right\|_{\rho} < \infty$ is denoted by $C^{\omega}_{\rho}\left(\mathbb{T}^2\right)$.
\end{enumerate}
\end{dfn}
Furthermore, we consider the space Diff$^{\omega}_{\rho}\left(\mathbb{T}^2\right)$ of those diffeomorphisms homotopic to the identity, for whose lift we have $f_i \in C^{\omega}_{\rho}\left(\mathbb{T}^2\right)$ for $i=1,2$.
\begin{dfn}
For $f,g \in \text{Diff}^{\omega}_{\rho}\left(\mathbb{T}^2\right)$ we define
\begin{equation*}
\left\|f\right\|_{\rho} = \max_{i=1,2} \left\|f_i\right\|_{\rho}
\end{equation*}
and the distance
\begin{equation*}
d_{\rho}\left(f,g\right) = \max_{i=1,2} \left\{\inf_{p \in \mathbb{Z}} \left\|f_i - g_i - p\right\|_{\rho}\right\}.
\end{equation*}
\end{dfn}
\begin{rem}
Diff$^{\omega}_{\rho}\left(\mathbb{T}^2\right)$ is a Banach space (see \cite{S} or \cite{L} for a more extensive treatment of these spaces).
\end{rem}

Moreover, for a diffeomorphism $T$ with lift $\tilde{T}\left(\theta,r\right)=\left(T_1\left(\theta,r\right), T_2\left(\theta,r\right)\right)$ we define
\begin{equation*}
\left\|DT\right\|_{\rho}= \max\left\{ \left\|\frac{\partial T_1}{\partial \theta}\right\|_{\rho}, \left\|\frac{\partial T_1}{\partial r}\right\|_{\rho}, \left\|\frac{\partial T_2}{\partial \theta}\right\|_{\rho}, \left\|\frac{\partial T_2}{\partial r}\right\|_{\rho} \right\}
\end{equation*}
and use the advantageous notation
\begin{equation*}
\left\|T\right\|_{\rho} = \max\left\{ \inf_{k \in \mathbb{Z}} \sup_{\left(\theta,r\right) \in A_{\rho}} \left|T_1\left(\theta,r\right)-\theta+k\right|, \  \inf_{k \in \mathbb{Z}} \sup_{\left(\theta,r\right) \in A_{\rho}} \left|T_2\left(\theta,r\right)-r+k\right| \right\}.
\end{equation*}

\section{Criterion for weak mixing}
In this section we will formulate a criterion for weak mixing that will be used in the smooth as well as in the real-analytic case. 
\subsection{$\left(\gamma,\delta, \varepsilon\right)$-distribution of horizontal intervals}
Since we work on the manifold $\mathbb{T}^2$, we recall the following definitions stated in \cite{FS}:
\begin{dfn}
Let $\hat{\eta}$ be a partial decomposition of $\mathbb{T}$ into intervals and consider on $\mathbb{T}^2$ the decomposition $\eta$ consisting of intervals in $\hat{\eta}$ times some $r \in \left[0,1\right]$. Sets of this form will be called horizontal intervals and decompositions of this type standard partial decompositions. On the other hand, sets of the form $\left\{\theta\right\} \times J$, where $J$ is an interval on the $r$-axis, are called vertical intervals.
\end{dfn}
Hereby, we can introduce the notion of $\left(\gamma,\delta, \varepsilon\right)$-distribution of a horizontal interval in the vertical direction:
\begin{dfn}
A diffeomorphism $\Phi: \mathbb{T}^2 \rightarrow \mathbb{T}^2$ $\left(\gamma,\delta, \varepsilon\right)$-distributes a horizontal interval $I$ if the following conditions are satisfied
\begin{itemize}
	\item $\pi_r\left(\Phi\left(I\right)\right)$ is an interval $J$ with $1-\delta \leq \lambda\left(J\right)\leq 1$,
	\item $\Phi\left(I\right)$ is contained in a vertical strip $\left[c,c+\gamma\right] \times J$ for some $c \in \mathbb{T}$,
	\item for any interval $\tilde{J}\subseteq J$ we have
	\begin{equation*}
	\left|\frac{\lambda\left(I \cap \Phi^{-1}\left(\mathbb{T} \times \tilde{J}\right)\right)}{\lambda\left(I\right)} - \frac{\lambda\left(\tilde{J}\right)}{\lambda\left(J\right)}\right|\leq \varepsilon \cdot \frac{\lambda\left(\tilde{J}\right)}{\lambda\left(J\right)}.
	\end{equation*}
\end{itemize}
\end{dfn}

\subsection{Statement of the criterion}
The proof of the criterion is the same as in \cite{FS}, section 3. The only difference occurs in comparison to Lemma 3.5., which in our case will be stated in the subsequent way:
\begin{lem}
Let $\left(\eta_n\right)_{n \in \mathbb{N}}$ be a sequence of standard partial decompositions of $\mathbb{T}^2$ into horizontal intervals of length less than $q^{-2.5}_n$. Moreover, let $g_n$ be defined by $g_n\left(\theta,r\right) = \left( \theta + \left[nq^{\sigma}_{n}\right] \cdot r,r\right)$ with some $0< \sigma < 1$ and let $\left(H_n\right)_{n \in \mathbb{N}}$ be a sequence of area-preserving diffeomorphisms such that for every $n \in \mathbb{N}$:
\begin{equation} \tag{C1} \label{eq:C1}
\left\| DH_{n-1} \right\|_0 \leq q^{0.5}_n.
\end{equation}
Consider the partitions $\nu_n \coloneqq \left\{ \Gamma_n = H_{n-1}\left(g_n\left(I_n\right)\right) \;:\; I_n \in \eta_n\right\}$. \\
Then $\eta_n \rightarrow \epsilon$ implies $\nu_n \rightarrow \epsilon$.
\end{lem}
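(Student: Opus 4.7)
The plan is to show that every atom $\Gamma_n = H_{n-1}(g_n(I_n))$ of the partition $\nu_n$ has diameter tending to zero uniformly in the atom, and then deduce $\nu_n\to\epsilon$ from this together with the area-preservation of $g_n$ and $H_{n-1}$.

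The first key observation is the special structure of the shear $g_n$. Although $\|Dg_n\|_0$ grows like $nq_n^{\sigma}$ and so is large, the map $g_n(\theta,r)=(\theta+[nq_n^{\sigma}]r,r)$ leaves the $r$-coordinate unchanged and acts as a mere translation $\theta\mapsto \theta+[nq_n^{\sigma}]r$ on each horizontal line $\{r=\text{const}\}$. Hence, for any atom $I_n=I\times\{r\}\in\eta_n$, the image $g_n(I_n)=(I+[nq_n^{\sigma}]r)\times\{r\}$ is again a horizontal interval of length $|I|<q_n^{-2.5}$, and for $n$ large enough ($q_n^{-2.5}<1/2$) its diameter in $\mathbb{T}^2$ coincides with this length. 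Geometrically, the large derivative of $g_n$ is harmless because it stretches only along the horizontal direction of the intervals themselves, not transversely.

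Next I would apply condition (\ref{eq:C1}) to push $g_n(I_n)$ forward through $H_{n-1}$. Since $\|DH_{n-1}\|_0\leq q_n^{0.5}$, the map $H_{n-1}$ is Lipschitz with that constant (up to an irrelevant dimensional factor coming from the operator norm on $\mathbb{R}^2$), so
\[ \text{diam}(\Gamma_n)\leq \|DH_{n-1}\|_0\cdot \text{diam}(g_n(I_n))\leq q_n^{0.5}\cdot q_n^{-2.5}=q_n^{-2}, \]
which tends to zero as $n\to\infty$, uniformly over all choices of $I_n\in\eta_n$.

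To conclude $\nu_n\to\epsilon$, observe that since $g_n$ and $H_{n-1}$ are area-preserving, the total Lebesgue measure of $\bigcup_{I_n\in\eta_n}\Gamma_n$ equals that of $\bigcup_{I_n\in\eta_n}I_n$, which tends to the full measure of $\mathbb{T}^2$ by the hypothesis $\eta_n\to\epsilon$. Uniform atom-diameter shrinkage, together with full-measure coverage in the limit, suffices to conclude that the $\sigma$-algebras generated by $\nu_n$ converge to the Borel $\sigma$-algebra, i.e.\ $\nu_n\to\epsilon$ in the sense used in \cite{FS}. The main---and essentially only---obstacle is the decoupling between the direction of shear ($\theta$) and the direction in which the atoms are thin (also $\theta$): this is precisely what makes the coarse Lipschitz bound (\ref{eq:C1}) for $H_{n-1}$ (with the modest growth $q_n^{0.5}$) sufficient, without the much larger quantity $\|Dg_n\|_0$ ever entering the estimate.
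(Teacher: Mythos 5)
Your proof is correct, and it takes a genuinely more direct route than the paper's. Your key observation is that $g_n$ restricted to a horizontal line $\{r=\mathrm{const}\}$ is a rigid translation, so $g_n(I_n)$ is again a horizontal interval of length $<q_n^{-2.5}$, whence $\mathrm{diam}(\Gamma_n)\leq C\,q_n^{0.5}\cdot q_n^{-2.5}\to 0$ uniformly over atoms; together with area preservation this gives mesh $\to 0$ and total measure $\to 1$, the standard sufficient condition for $\nu_n\to\epsilon$ (made precise via inner/outer regularity: sandwich a Borel set between a compact subset and an open superset and take the union of all atoms meeting the compact set). The paper instead introduces an intermediate collection of squares $S_{n,i}$ of side length between $q_n^{-2}$ and $q_n^{-1.5}$, shows that the images $C_{n,i}=H_{n-1}(g_n(S_{n,i}))$ have diameter $\leq n\sqrt{2}\,q_n^{\sigma-1}\to 0$ using the crude bound $\left\|Dg_n\right\|_0\leq nq_n^{\sigma}$ (this is exactly where the hypothesis $\sigma<1$ enters), and then refines each square by the $\eta_n$-atoms it contains, losing only $O(\sqrt{\varepsilon})$ in measure. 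Your argument never needs $\left\|Dg_n\right\|_0$, the intermediate scale, or $\sigma<1$, and it yields a sharper diameter bound on the atoms themselves; on the other hand it relies on the atoms of $\eta_n$ being aligned with the level sets of the shear, whereas the paper's two-scale argument would survive for partitions not so aligned, provided $\left\|Dg_n\right\|_0$ grows more slowly than the reciprocal of the square side. The one step you assert without proof --- that uniform atom-diameter shrinkage plus total measure tending to $1$ implies convergence to the point partition --- is standard, and the paper invokes the analogous statement for the sets $C_{n,i}$ with the same level of detail, so this is not a gap relative to the paper's own standard of rigor.
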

\begin{proof}
For every $\varepsilon >0$ we can choose $n$ large enough such that $\mu\left(\bigcup_{I \in \eta_n} I\right) > 1-\varepsilon$ (because of $\eta_n \rightarrow \epsilon$) and there is a collection of squares $\tilde{S}_n \coloneqq \left\{S_{n,i}\right\}$ with side length between $q^{-1.5}_n$ and $q^{-2}_n$ with total measure of the union $S_n \coloneqq \bigcup_i S_{n,i}$ greater than $1-\sqrt{\varepsilon}$. Then we have $\mu\left(\bigcup_{I \in \eta_n} I \cap S_n\right) \geq \left(1-\sqrt{\varepsilon}\right) \cdot \mu\left(S_{n}\right)$, because otherwise $\mu\left(S_n \setminus \bigcup_{I \in \eta_n} I\right) > \sqrt{\varepsilon} \cdot \mu\left(S_n\right) > \sqrt{\varepsilon} \cdot \left(1-\sqrt{\varepsilon}\right)$ and so $\mu\left(\mathbb{T}^2\setminus \bigcup_{I \in \eta_n} I\right) > \sqrt{\varepsilon} - \varepsilon > \varepsilon$ in case of $\varepsilon < \frac{1}{4}$, which contradicts $\mu\left(\bigcup_{I \in \eta_n} I\right) > 1-\varepsilon$. Since the horizontal intervals $I \in \eta_n$ have length less than $q^{-2.5}_n$, we can approximate the squares in the above collection $\tilde{S}_n$ for $n$ sufficiently large in such a way that $\mu\left(\bigcup_{I \in \eta_n, I \subset S_{n}} I\right) \geq \left(1-2\sqrt{\varepsilon}\right) \cdot \mu\left(S_{n}\right)$. \\
In the next step we consider the sets $C_{n,i} \coloneqq H_{n-1}\left(g_n\left(S_{n,i}\right)\right)$ with $S_{n,i} \in \tilde{S}_n$. For these sets $C_{n,i}$ we have:
\begin{equation*}
\text{diam}\left(C_{n,i}\right) \leq \left\|DH_{n-1}\right\|_0 \cdot \left\|Dg_n\right\|_0 \cdot \text{diam}\left(S_{n,i}\right) \leq q^{0.5}_n \cdot n \cdot q^{\sigma}_n \cdot \sqrt{2} \cdot q^{-1.5}_n = n \cdot \sqrt{2} \cdot q^{\sigma-1}_n,
\end{equation*}
which goes to $0$ as $n \rightarrow \infty$ because $\sigma <1$. Therefore, any Borel set $B$ can be approximated by a union of such sets $C_{n,i}$ with any prescribed accuracy if $n$ is sufficiently large, i.e. for every $\varepsilon > 0$ there is $N \in \mathbb{N}$ such that for $n \geq N$ there is an index set $J_n$: $\mu\left( B \triangle \bigcup_{i \in J_n} C_{n,i}\right)< \varepsilon$. Now we choose the union of these elements $I \in \eta_n$ contained in the occurring cubes $S_{n,i}$ and obtain: $\mu\left(B \triangle \bigcup H_{n-1} \circ g_n\left(I\right)\right) \leq \mu\left( B \triangle \bigcup_{i \in J_n} C_{n,i}\right) + \mu\left(S_n \setminus \bigcup_{I \in \eta_n, I \subset S_{n}} I\right) < \varepsilon + 2 \sqrt{\varepsilon} \cdot \mu\left(S_n\right) < 3 \sqrt{\varepsilon}$. Thus, $B$ gets well approximated by unions of elements of $\nu_n$ if $n$ is chosen sufficiently large.
\end{proof}
Now the criterion for weak mixing can be stated in the following way (compare with \cite{FS}, Proposition 3.9.):
\begin{prop} \label{prop:wm}
Let $f_n = H_n \circ R_{\alpha_{n+1}} \circ H^{-1}_n$ be diffeomorphisms constructed as explained in the introduction with $0< \sigma < \frac{1}{2}$ and such that $\left\| DH_{n-1} \right\|_0 \leq q^{0.5}_n$ holds for all $n \in \mathbb{N}$. \\
Suppose that the limit $f \coloneqq \lim_{n \rightarrow \infty} f_n$ exists. If there exists a sequence $\left(m_n\right)_{n \in \mathbb{N}}$ of natural numbers satisfying $d_0\left(f^{m_n}_n, f^{m_n}\right)< \frac{1}{2^n}$ and a sequence $\left(\eta_n\right)_{n \in \mathbb{N}}$ of standard partial decompositions of $\mathbb{T}^2$ into horizontal intervals of length less than $q^{-2.5}_n$ such that $\eta_n \rightarrow \epsilon$ and the diffeomorphism $\Phi_n \coloneqq \phi_n \circ R^{m_n}_{\alpha_{n+1}} \circ \phi^{-1}_{n}$ $\left(\frac{1}{nq^{\sigma}_{n}}, \frac{1}{n}, \frac{1}{n}\right)$-distributes every interval $I_n \in \eta_n$, then the limit diffeomorphism $f$ is weak mixing.
\end{prop}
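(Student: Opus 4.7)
The plan is to verify the standard approximation criterion for weak mixing: $f$ is weak mixing provided there exists a generating sequence of partial partitions $\nu_n$ and a sequence $(m_n)$ of integers such that for every $\Gamma_n \in \nu_n$ and every measurable $B \subset \mathbb{T}^2$ one has $\mu(f^{m_n}\Gamma_n \cap B) - \mu(\Gamma_n)\mu(B) \to 0$. I would use $\nu_n := \{H_{n-1}(g_n(I_n)) : I_n \in \eta_n\}$ together with the sequence $(m_n)$ given in the hypotheses. The preceding lemma, applicable thanks to condition (C1) and $\eta_n \to \epsilon$, ensures that $\nu_n \to \epsilon$, so the partitions $\nu_n$ are indeed generating; it therefore suffices to control $\mu(f^{m_n}\Gamma_n \cap B)$ for $\Gamma_n \in \nu_n$ and $B$ measurable.

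Using the closeness hypothesis $d_0(f^{m_n}_n, f^{m_n}) < 2^{-n}$, I would replace $f^{m_n}$ by $f^{m_n}_n$ at the cost of an $o(1)$ error in measure, after first approximating $B$ by a finite union of $\nu_n$-atoms so that the relevant boundary has controlled measure. A short computation exploiting $H_n = H_{n-1} \circ g_n \circ \phi_n$ and $\Phi_n = \phi_n \circ R^{m_n}_{\alpha_{n+1}} \circ \phi^{-1}_n$ then gives the key identity
\[
f_n^{m_n}(\Gamma_n) = H_{n-1}\bigl(g_n(\Phi_n(I_n))\bigr),
\]
so the problem reduces to understanding the geometry of $g_n(\Phi_n(I_n))$ on $\mathbb{T}^2$.

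The $\left(1/(nq_n^{\sigma}), 1/n, 1/n\right)$-distribution hypothesis tells us that $\Phi_n(I_n)$ lies in a vertical strip $[c, c + 1/(nq_n^{\sigma})] \times J$ with $\lambda(J) \geq 1 - 1/n$, and that its $r$-marginal on $J$ is close (in relative error $1/n$) to normalised Lebesgue. The shear $g_n(\theta, r) = (\theta + [nq_n^{\sigma}]r, r)$ preserves every horizontal level $\mathbb{T} \times \{r\}$ but shifts it by $[nq_n^{\sigma}] r$; as $r$ runs over $J$ this shift sweeps through roughly $[nq_n^{\sigma}] \lambda(J) \gg 1$ full revolutions of the circle, so $g_n$ unrolls the narrow vertical strip into a densely winding thin parallelogram that approximately equidistributes over $\mathbb{T}^2$. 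For a test set $\widetilde{B}$ approximable by finitely many $\nu_n$-atoms, a Fubini computation combining this winding with the density bound in the third bullet of the distribution definition yields
\[
\mu\bigl(g_n(\Phi_n(I_n)) \cap \widetilde{B}\bigr) = \mu(\Gamma_n)\,\mu(\widetilde{B}) + o(\mu(\Gamma_n)).
\]
Since $H_{n-1}$ is area-preserving, the choice $\widetilde{B} := H_{n-1}^{-1}(B)$ transfers this to $\mu(f_n^{m_n}\Gamma_n \cap B) = \mu(\Gamma_n)\mu(B) + o(\mu(\Gamma_n))$, from which the desired conclusion follows after passing back from $f_n^{m_n}$ to $f^{m_n}$.

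The main obstacle is the Fubini equidistribution step: one must show that the narrow $\theta$-window of width $1/(nq_n^{\sigma})$, shifted linearly in $r$ with slope $[nq_n^{\sigma}]$, sees every measurable set $\widetilde{B}$ with approximately its Lebesgue density as $r$ varies over $J$. The strategy is to approximate $\widetilde{B}$ by unions of pre-images of $\nu_n$-atoms, which under $H_{n-1}^{-1}$ reduce to horizontal intervals, and to exploit that the window completes many full revolutions of the circle while traversing $J$. The relative $1/n$-defect in the $r$-marginal, the $1/(nq_n^{\sigma})$-winding error, and the set-approximation error must all be combined to give a clean $o(\mu(\Gamma_n))$ bound; here $\sigma > 0$ is used to guarantee enough winding, and condition (C1) re-enters through the preceding lemma to ensure the generating property that underlies the whole approximation scheme.
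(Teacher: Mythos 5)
Your proposal reconstructs exactly the argument the paper relies on: the paper itself defers the proof of Proposition \ref{prop:wm} to \cite{FS}, Section 3, only supplying the modified lemma that condition \ref{eq:C1} and $\eta_n \rightarrow \epsilon$ force $\nu_n = \left\{H_{n-1}\left(g_n\left(I_n\right)\right)\right\} \rightarrow \epsilon$, and your identity $f_n^{m_n}\left(\Gamma_n\right) = H_{n-1}\left(g_n\left(\Phi_n\left(I_n\right)\right)\right)$ together with the shear-equidistribution step is precisely the Fayad--Saprykina scheme. The approach is correct and essentially identical to the paper's.
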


\begin{rem}
In \cite{FS} it is demanded $\left\|DH_{n-1}\right\|_0 < \ln\left(q_n\right)$ instead of requirement \ref{eq:C1}. We did this modification because the fulfilment of the original condition would lead to stricter requirements on the rigidity sequence: In particular, equation \ref{eq:C1diff} would require an exponential growth rate.
\end{rem}

\section{Convergence of $\left(f_n\right)_{n \in \mathbb{N}}$ in Diff$^{\infty}\left(\mathbb{T}^2\right)$}

\subsection{Properties of the conjugation maps $h_n$ and $H_n$}
Using the explicit definitions of the maps $g_n$, $\phi_n$ we can compute
\begin{equation*}
h_n\left(\theta,r\right) = \left(\theta + \left[nq^{\sigma}_n\right]\cdot r + \left[nq^{\sigma}_n\right] \cdot q^2_n \cdot \cos\left(2 \pi q_n \theta\right), r + q^2_n \cdot \cos\left(2 \pi q_n \theta\right)\right)
\end{equation*}
as well as
\begin{equation*}
h^{-1}_n\left(\theta,r\right) = \left(\theta - \left[nq^{\sigma}_n\right]\cdot r, r - q^2_n \cdot \cos\left(2 \pi q_n\left(\theta - \left[nq^{\sigma}_n\right]\cdot r\right)\right)\right).
\end{equation*}
Then we can estimate for every $k \in \mathbb{N}$ and every multiindex $\vec{a} \in \mathbb{N}^2_0$, $\left|\vec{a}\right|\leq k$:
\begin{equation*}
\left\|D_{\vec{a}} h_n \right\|_0 \leq 2 \cdot \left(2 \pi\right)^k \cdot \left[nq^{\sigma}_n\right] \cdot q^{2+k}_n
\end{equation*}
and
\begin{equation*}
\left\|D_{\vec{a}} h^{-1}_n \right\|_0 \leq 2 \cdot \left(2 \pi\right)^k \cdot \left[nq^{\sigma}_n\right]^k \cdot q^{2+k}_n.
\end{equation*}
Thus, we obtain
\begin{equation} \label{eq:h_n}
|||h_n|||_k \leq 2^{k+1} \cdot \pi^k \cdot q^{2+k}_n \cdot n^k \cdot q^{\sigma \cdot k}_n \leq \left(2 \pi n q^2_n\right)^{k+1}.
\end{equation}
In the next step we want to deduce norm estimates for the conjugation map $H_n = H_{n-1} \circ h_n$. Therefore, we have to understand the derivatives of a composition of maps: 
\begin{lem} \label{lem:comp}
Let $g,h \in \text{Diff}^{\infty}\left(\mathbb{T}^2\right)$ and $k \in \mathbb{N}$. Then for the composition $g \circ h$ it holds
\begin{equation*}
|||g \circ h |||_k \leq \left(k+1\right)! \cdot |||g|||^k_k \cdot |||h|||^k_k.
\end{equation*}
\end{lem}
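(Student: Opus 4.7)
The proof goes through the multivariate chain rule (Faà di Bruno formula). For any multi-index $\vec{a}$ with $1 \leq |\vec{a}| \leq k$ and coordinate $i \in \{1,2\}$, the partial derivative $D_{\vec{a}}([G \circ H]_i)$ decomposes into a sum of terms indexed by pairs $(\pi,\vec{l})$, where $\pi$ is a set partition of the $|\vec{a}|$ derivative positions and $\vec{l}:\pi\to\{1,2\}$ assigns a coordinate of $H$ to each block. Explicitly, each summand has the form $(D_{\vec{b}(\pi,\vec{l})}[G]_i)(H) \cdot \prod_{B\in\pi} D_{\vec{a}|_B}[H]_{l_B}$, a product of $|\pi|+1$ factors, each of which is a single partial derivative of $[G]_i$ or of some $[H]_{l_B}$ of order between $1$ and $|\vec{a}|\leq k$.

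Every such factor is therefore bounded in sup norm by $|||g|||_k$ or $|||h|||_k$. Since the lift of a torus diffeomorphism homotopic to the identity satisfies $\|D_j[F]_j\|_0 \geq 1$, the norms $|||g|||_k, |||h|||_k$ are $\geq 1$, and hence each summand is crudely bounded by $|||g|||_k^k \cdot |||h|||_k^k$; the zero-order term $\|[G \circ H]_i\|_0 \leq \|[G]_i\|_0 \leq |||g|||_k$ is absorbed into the same bound. To count summands, for fixed $n = |\vec{a}|$ the number of pairs $(\pi,\vec{l})$ equals $\sum_{\pi}2^{|\pi|}$; an induction on $n$, distinguishing in which block the last derivative position lies, gives the bound $(n+1)! \leq (k+1)!$.

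For the inverse part of the $|||\cdot|||_k$-norm one argues symmetrically by writing $(g \circ h)^{-1} = h^{-1} \circ g^{-1}$ and applying the same estimate with $h^{-1}, g^{-1}$ in the roles of $g, h$, noting that $|||g|||_k$ and $|||h|||_k$ already control the inverses. Taking the maximum over all admissible multi-indices and coordinates yields the claimed inequality. The only delicate point in the plan is the combinatorial estimate $\sum_{\pi}2^{|\pi|} \leq (n+1)!$; the rest is routine bookkeeping of the chain rule together with the crude uniform bound on each factor.
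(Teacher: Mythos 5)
Your proposal is correct and follows essentially the same route as the paper: the paper proves, by induction on $k$, exactly the structural fact your Fa\`a di Bruno decomposition packages in closed form (at most $(k+1)!$ summands, each a product of one derivative of the outer map of order $\leq k$ and at most $k$ derivatives of the inner map of order $\leq k$), bounds each factor crudely by the $|||\cdot|||_k$-norms, and treats the inverse via $(g\circ h)^{-1}=h^{-1}\circ g^{-1}$; your counting induction $a_{n+1}\leq (n+2)a_n$ is the same recursion as the paper's ``$2+k$ summands arise out of one.'' Your explicit remark that $|||g|||_k,|||h|||_k\geq 1$ is needed to absorb the single outer factor into $|||g|||_k^k$ is a small point the paper leaves implicit.
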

\begin{proof}
By induction on $k \in \mathbb{N}$ we will prove the following observation: \\
\textbf{Claim: }\textit{For any multiindex $\vec{a} \in \mathbb{N}^2_0$ with $\left|\vec{a}\right|=k$ and $i\in \left\{1,2\right\}$ the partial derivative $D_{\vec{a}} \left[g \circ h\right]_i$ consists of at most $\left(k+1\right)!$ summands, where each summand is the product of one derivative of $g$ of order at most $k$ and at most $k$ derivatives of $h$ of order at most $k$.}
\begin{itemize}
	\item \textit{Start:} $k=1$ \\
	For $i_1, i \in \left\{1,2\right\}$ we compute:
	\begin{equation*}
	D_{x_{i_1}} \left[g \circ h\right]_i\left(x_1,x_2\right) = \sum^{2}_{j_1=1} \left(D_{x_{j_1}} \left[g\right]_i\right)\left(h\left(x_1,x_2\right)\right) \cdot D_{x_{i_1}} \left[h\right]_{j_1}\left(x_1,x_2\right).
	\end{equation*}
	Hence, this derivative consists of $2!=2$ summands and each summand has the announced form.
	\item \textit{Induction assumption:} The claim holds for $k \in \mathbb{N}$.
	\item \textit{Induction step:} $k\rightarrow k+1$ \\
	Let $i \in \left\{1,2\right\}$ and $\vec{b} \in \mathbb{N}^2_0$ be any multiindex of order $\left|\vec{b}\right|=k+1$. There are $j \in \left\{1,2\right\}$ and a multiindex $\vec{a}$ of order $\left|\vec{a}\right|=k$ such that $D_{\vec{b}}= D_{x_j}D_{\vec{a}}$. By the induction assumption the partial derivative $D_{\vec{a}}\left[g \circ h\right]_i$ consists of at most $\left(k+1\right)!$ summands, at which the summand with the most factors is of the subsequent form:
	\begin{equation*}
	D_{\vec{c}_1} \left[g\right]_i\left(h\left(x_1,x_2\right)\right) \cdot D_{\vec{c}_2}\left[h\right]_{i_2}\left(x_1,x_2\right) \cdot ... \cdot D_{\vec{c}_{k+1}}\left[h\right]_{i_{k+1}}\left(x_1,x_2\right),
	\end{equation*}
	where each $\vec{c}_i$ is of order at most $k$. Using the product rule we compute how the derivative $D_{x_j}$ acts on such a summand: 
	\begin{align*}
	&\left(\sum^{2}_{j_1=1}D_{x_{j_1}}D_{\vec{c}_1} \left[g\right]_i\circ h \cdot D_{x_j}\left[h\right]_{j_1} D_{\vec{c}_2}\left[h\right]_{i_2} \cdot ... \cdot D_{\vec{c}_{k+1}}\left[h\right]_{i_{k+1}} \right) +  \\
	&D_{\vec{c}_1} \left[g\right]_i\circ h \cdot D_{x_j}D_{\vec{c}_2}\left[h\right]_{i_2}\cdot ... \cdot D_{\vec{c}_{k+1}}\left[h\right]_{i_{k+1}}+...+D_{\vec{c}_1} \left[g\right]_i\circ h \cdot D_{\vec{c}_2}\left[h\right]_{i_2} \cdot ... \cdot D_{x_j}D_{\vec{c}_{k+1}}\left[h\right]_{i_{k+1}}
	\end{align*}
	Thus, each summand is the product of one derivative of $g$ of order at most $k+1$ and at most $k+1$ derivatives of $h$ of order at most $k+1$. Moreover, we observe that $2 + k$ summands arise out of one. So the number of summands can be estimated by $\left(k+2\right) \cdot \left(k+1\right)! = \left(k+2\right)!$ and the claim is verified.
\end{itemize}
Using this claim we obtain for $i \in \left\{1,2\right\}$ and any multiindex $\vec{a} \in \mathbb{N}^2_0$ of order $\left|\vec{a}\right|=k$:
\begin{equation*}
\left\|D_{\vec{a}} \left[g \circ h\right]_i\right\|_0 \leq \left(k+1\right)! \cdot |||g|||_k \cdot |||h|||^k_k.
\end{equation*}
Applying the claim on $h^{-1} \circ g^{-1}$ yields:
\begin{equation*}
\left\|D_{\vec{a}} \left[h^{-1} \circ g^{-1}\right]_i\right\|_0 \leq \left(k+1\right)! \cdot |||g|||^k_k \cdot |||h|||_k.
\end{equation*}
We conclude
\begin{equation*}
|||g \circ h |||_k \leq \left(k+1\right)! \cdot |||g|||^k_k \cdot |||h|||^k_k.
\end{equation*}
\end{proof}
 
Using this result we compute for every $k \in \mathbb{N}$: 
\begin{equation} \label{eq:H_n}
|||H_n|||_{k} \leq \left(k+1\right)! \cdot |||H_{n-1}|||^k_k \cdot |||h_n|||^k_k.
\end{equation}
Hereby, we can deduce the subsequent estimate of the norm $|||H_n|||_{k+1}$ under some assumptions on the growth rate of the numbers $q_n$:
\begin{lem} \label{lem:H}
Let $k,n \in \mathbb{N}$ and $n\geq2$. Assume
\begin{equation} \tag{A2} \label{eq:billig}
q_{n+1} \geq 2 \cdot \pi \cdot n \cdot q^2_n.
\end{equation}
Then we have
\begin{equation*}
|||H_n|||_{k+1} \leq \left(\left(k+2\right)!\right)^{\left(k+2\right)^{n-2}} \cdot \left(2 \pi n q_n\right)^{\left(k+2\right) \cdot \left(k+1\right)^{n-1} \cdot \left(n+1\right)}.
\end{equation*}
\end{lem}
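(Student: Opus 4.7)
The plan is induction on $n\geq 2$, using two ingredients already at hand: the single-map bound $|||h_n|||_{k+1} \leq (2\pi n q_n^2)^{k+2}$ from \eqref{eq:h_n}, and the composition recursion $|||H_n|||_{k+1} \leq (k+2)!\cdot |||H_{n-1}|||_{k+1}^{k+1} \cdot |||h_n|||_{k+1}^{k+1}$ provided by \eqref{eq:H_n} applied to $H_n = H_{n-1}\circ h_n$. The growth assumption \eqref{eq:billig} is used only through its weak consequence $2\pi(n-1)q_{n-1}\leq q_n$ (valid since $q_{n-1}\geq 1$), which allows every occurrence of $q_{n-1}$ produced by the inductive hypothesis to be absorbed into the $q_n$ of the target bound.

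For the base case $n=2$, since $H_2 = h_1\circ h_2$ the two ingredients immediately give
\[
|||H_2|||_{k+1}\leq (k+2)!\cdot (2\pi q_1^2)^{(k+2)(k+1)}\cdot (4\pi q_2^2)^{(k+2)(k+1)};
\]
using $2\pi q_1^2 \leq q_2$ to eliminate $q_1$ and absorbing the remaining constants into $4\pi q_2$ matches the claimed bound (for which $(k+2)^{n-2}=1$ and $(k+2)(k+1)^{n-1}(n+1)=3(k+2)(k+1)$). For the inductive step ($n\geq 3$) I would substitute the inductive hypothesis for $|||H_{n-1}|||_{k+1}$ into the recursion, raise it to the $(k+1)$-th power, multiply in $(k+2)!\cdot(2\pi n q_n^2)^{(k+2)(k+1)}$, and convert every residual $q_{n-1}$ into $q_n$ via \eqref{eq:billig}. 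Matching the resulting estimate to the target then collapses to two elementary arithmetic inequalities on the exponents, namely $1 + (k+1)(k+2)^{n-3} \leq (k+2)^{n-2}$ and $(k+1)^{n-2}\cdot n + 2 \leq (k+1)^{n-2}(n+1)$, which are equivalent to $(k+2)^{n-3}\geq 1$ and $(k+1)^{n-2}\geq 2$ respectively, and hold for $n\geq 3$.

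I do not expect a conceptual obstacle here: the proof is essentially exponent bookkeeping. The only care required is to separate $n=2$ (where the inductive arithmetic degenerates) from the inductive step, and to remember that \eqref{eq:billig} must be invoked both at the base (to swallow $q_1$) and at every inductive step (to swallow $q_{n-1}$). Because the target bound carries comfortable slack in both the factorial and the $q_n$ exponents, no delicate tuning of constants is needed.
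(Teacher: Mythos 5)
Your proposal is correct and follows essentially the same route as the paper: induction on $n$ via the recursion $|||H_n|||_{k+1}\leq (k+2)!\cdot|||H_{n-1}|||_{k+1}^{k+1}\cdot|||h_n|||_{k+1}^{k+1}$ together with the bound $|||h_n|||_{k+1}\leq(2\pi n q_n^2)^{k+2}$, with condition \eqref{eq:billig} used to absorb the previous stage's $q_{n-1}$ into $q_n$, and the same two exponent inequalities (yours are the paper's, reindexed from the step $n\to n+1$ to $n-1\to n$). No gaps.
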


\begin{proof}
Let $k \in \mathbb{N}$ be arbitrary. We proof this result by induction on $n$:
\begin{itemize}
	\item \textit{Start:} $n=2$ \\
	Using equation \ref{eq:H_n} and the norm estimate on $h_n$ from equation \ref{eq:h_n} we obtain the claim:
	\begin{align*}
	|||H_2|||_{k+1} & \leq \left(k+2\right)! \cdot |||H_{1}|||^{k+1}_{k+1} \cdot |||h_2|||^{k+1}_{k+1} \\
	& = \left(k+2\right)! \cdot |||h_{1}|||^{k+1}_{k+1} \cdot |||h_2|||^{k+1}_{k+1} \\
	& \leq \left(k+2\right)! \cdot \left(\left(2 \pi q^2_1\right)^{k+2}\right)^{k+1} \cdot \left(\left(2 \pi \cdot 2 \cdot q^2_2\right)^{k+2}\right)^{k+1} \\
	& \leq \left(k+2\right)! \cdot q^{\left(k+2\right)\cdot \left(k+1\right)}_2 \cdot \left(2 \pi \cdot 2 \cdot q_2\right)^{2 \cdot \left(k+2\right) \cdot \left(k+1\right)} \\
	& \leq \left(k+2\right)! \cdot \left(2 \pi \cdot 2 \cdot q_2\right)^{3 \cdot \left(k+2\right) \cdot \left(k+1\right)} \\
	& = \left(\left(k+2\right)!\right)^{\left(k+2\right)^{2-2}} \cdot \left(2 \pi \cdot 2 \cdot q_2\right)^{\left(k+2\right) \cdot \left(k+1\right)^{2-1} \cdot \left(2+1\right)}
	\end{align*}
	\item \textit{Induction assumption:} The claim is true for $n \in \mathbb{N}$, $n\geq2$.
	\item \textit{Induction step $n \rightarrow n+1$:} \\
	Using equation \ref{eq:H_n}, the norm estimate on $h_n$ from equation \ref{eq:h_n} and the induction assumption we compute:
	\begin{align*}
	& |||H_{n+1}|||_{k+1} \leq \left(k+2\right)! \cdot |||H_{n}|||^{k+1}_{k+1} \cdot |||h_{n+1}|||^{k+1}_{k+1} \\
	& \leq \left(k+2\right)! \cdot \left(\left(\left(k+2\right)!\right)^{\left(k+2\right)^{n-2}} \cdot \left(2 \pi n q_n\right)^{\left(k+2\right) \cdot \left(k+1\right)^{n-1} \cdot \left(n+1\right)}\right)^{k+1} \cdot \left(\left(2 \pi \left(n+1\right) q^2_{n+1} \right)^{k+2}\right)^{k+1} \\
	& \leq \left(k+2\right)! \cdot \left(\left(k+2\right)!\right)^{\left(k+2\right)^{n-2} \cdot \left(k+1\right)} \cdot q^{\left(k+2\right) \cdot \left(k+1\right)^n \cdot \left(n+1\right)}_{n+1} \cdot \left(2 \pi \left(n+1\right)q_{n+1}\right)^{2 \cdot \left(k+2\right)\cdot \left(k+1\right)} \\
	& \leq \left(\left(k+2\right)!\right)^{\left(k+2\right)^{n-1}} \cdot \left(2 \pi \left(n+1\right)q_{n+1}\right)^{\left(k+2\right) \cdot \left(k+1\right)^n \cdot \left(n+2\right)},
	\end{align*}
	where we used in the last step the subsequent estimation:
\begin{align*}
& \left(k+2\right) \cdot \left(k+1\right)^n \cdot \left(n+1\right) + 2 \cdot \left(k+2\right) \cdot \left(k+1\right) = \left(k+2\right) \cdot \left(k+1\right) \cdot \left(\left(k+1\right)^{n-1} \cdot \left(n+1\right)+2\right) \\
& \leq \left(k+2\right) \cdot \left(k+1\right) \cdot \left(k+1\right)^{n-1} \cdot \left(n+2\right) = \left(k+2\right) \cdot \left(k+1\right)^{n} \cdot \left(n+2\right).
\end{align*}
\end{itemize}
\end{proof}
\begin{rem}
As a special case of Lemma \ref{lem:comp} we have $\left\|DH_n\right\|_0 \leq 2! \cdot \left\|DH_{n-1}\right\|_0 \cdot \left\|Dh_n\right\|_0$. With the aid of equation \ref{eq:h_n} we can estimate:
\begin{equation*}
\left\|DH_n\right\|_0 \leq 2! \cdot q^{0.5}_n \cdot \left(2 \pi \cdot n \cdot q^2_n\right)^2 = 8 \pi^2 \cdot n^2 \cdot q^{4.5}_n,
\end{equation*}
where we used condition \ref{eq:C1}, i.e. $\left\|DH_{n-1}\right\|_0 \leq q^{0.5}_n$. In order to guarantee this property for $DH_n$ we demand:
\begin{equation} \tag{A3} \label{eq:C1diff}
q_{n+1} \geq \left\|DH_n\right\|^2_0 \geq \left(8 \pi^2 \cdot n^2 \cdot q^{4.5}_n\right)^2 = 64 \pi^4 \cdot n^4 \cdot q^9_n.
\end{equation}
\end{rem}

\subsection{Proof of Convergence} \label{subsection:convA}
In the proof of convergence the following result, which is more precise than \cite{FS}, Lemma 5.6., is useful:
\begin{lem} \label{lem:conj}
Let $k \in \mathbb{N}_0$ and $h \in \text{Diff}^{\infty}\left(\mathbb{T}^2\right)$. Then for all $\alpha, \beta \in \mathbb{R}$ we obtain:
\begin{equation*}
d_k\left(h \circ R_{\alpha} \circ h^{-1}, h \circ R_{\beta} \circ h^{-1}\right) \leq C_k \cdot |||h|||^{k+1}_{k+1} \cdot \left|\alpha - \beta\right|,
\end{equation*}
where $C_k = \left(k+1\right)!$.
\end{lem}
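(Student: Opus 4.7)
The approach is to reduce the difference $h \circ R_\alpha \circ h^{-1} - h \circ R_\beta \circ h^{-1}$ to a single-parameter integral in the rotation parameter, and then invoke the combinatorial bound from the claim inside the proof of Lemma \ref{lem:comp} to control its derivatives. Writing $H$ for the lift of $h$ and using that $R_t(y)=y+(t,0)$ is a translation, the fundamental theorem of calculus yields
\[
H \circ R_\alpha \circ H^{-1}(x) - H \circ R_\beta \circ H^{-1}(x) = \int_\beta^\alpha \partial_\theta H\!\left(H^{-1}(x) + (t, 0)\right) dt.
\]
The first step is to apply $D_{\vec{a}}$ to both sides for every multiindex $\vec{a}$ with $|\vec{a}|\leq k$ and to interchange differentiation with the integral, reducing the task to a uniform-in-$t$ bound on the integrand.

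Bounding the integrand is the heart of the argument. The integrand is the $|\vec{a}|$-th derivative in $x$ of the composition $(\partial_\theta H)\circ(R_t\circ H^{-1})$, so the claim inside the proof of Lemma \ref{lem:comp} decomposes it into at most $(k+1)!$ summands, each a product of one derivative of $\partial_\theta H$ of order $\leq k$ (hence a derivative of $H$ of order $\leq k+1$, bounded by $|||h|||_{k+1}$) and at most $k$ derivatives of $R_t\circ H^{-1}$ of order $\leq k$. Since the translation $R_t$ drops out under any derivative of order $\geq 1$, these inner factors are derivatives of $H^{-1}$ and are bounded by $|||h|||_k\leq |||h|||_{k+1}$. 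The elementary observation $|||h|||_{k+1}\geq 1$, which follows from the equivariance $H(y+(1,0))=H(y)+(1,0)$ forcing $\sup \partial_\theta [H]_1 \geq 1$, then allows me to estimate each summand by $|||h|||_{k+1}^{k+1}$, giving the pointwise bound $(k+1)! \cdot |||h|||_{k+1}^{k+1}$ for the integrand.

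Integrating over $[\beta,\alpha]$ and choosing $p=0$ in the infimum defining $\tilde{d}_0$ yields the required estimate for $\tilde{d}_k(h\circ R_\alpha \circ h^{-1}, h\circ R_\beta \circ h^{-1})$. Because $(h\circ R_\gamma \circ h^{-1})^{-1} = h\circ R_{-\gamma} \circ h^{-1}$, the identical argument applied to the inverses, with $-\alpha,-\beta$ in place of $\alpha,\beta$, produces the analogous bound while preserving the factor $|\alpha-\beta|$; taking the maximum then gives the claimed $d_k$-estimate with $C_k=(k+1)!$. The main technical obstacle is a small piece of bookkeeping, namely verifying that the claim of Lemma \ref{lem:comp} is applied here with outer function $\partial_\theta H$ (a derivative of $H$) rather than $H$ itself, so that the worst-case product of norms is exactly $|||h|||_{k+1}^{k+1}$ and the worst-case combinatorial factor is exactly $(k+1)!$, matching the statement of the lemma rather than any larger constant.
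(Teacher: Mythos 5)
Your proof is correct and follows essentially the same route as the paper's: both rest on the combinatorial claim established inside the proof of Lemma \ref{lem:comp} together with a first-order estimate in the rotation parameter, and both treat the inverse via $\left(h \circ R_{\gamma} \circ h^{-1}\right)^{-1} = h \circ R_{-\gamma} \circ h^{-1}$. The only difference is one of packaging: you extract the factor $\left|\alpha-\beta\right|$ by writing the difference as an integral of $\partial_\theta H$ over the rotation parameter and differentiating under the integral, whereas the paper expands $D_{\vec{a}}\left[h \circ R_{\alpha} \circ h^{-1}\right]_i$ first and applies the mean value theorem to the outer derivative of $h$ in each summand, arriving at the same bound $\left(k+1\right)! \cdot |||h|||_{k+1} \cdot |||h|||^{k}_{k} \cdot \left|\alpha-\beta\right| \leq \left(k+1\right)! \cdot |||h|||^{k+1}_{k+1} \cdot \left|\alpha-\beta\right|$.
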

\begin{proof}
As an application of the claim in the proof of Lemma \ref{lem:comp} we observe \\
\textbf{Fact: }\textit{For any $\vec{a} \in \mathbb{N}^2_0$ with $\left|\vec{a}\right|=k$ and $i\in \left\{1,2\right\}$ the partial derivative $D_{\vec{a}} \left[h \circ R_{\alpha} \circ h^{-1}\right]_i$ consists of at most $\left(k+1\right)!$ summands, where each summand is the product of one derivative of $h$ of order at most $k$ and at most $k$ derivatives of $h^{-1}$ of order at most $k$.} \\
Furthermore, with the aid of the mean value theorem we can estimate for any multiindex $\vec{a} \in \mathbb{N}^2_0$ with $\left|\vec{a}\right|\leq k$ and $i\in \left\{1,2\right\}$:
\begin{align*}
\left|D_{\vec{a}}\left[h\right]_i\left(R_{\alpha} \circ h^{-1}\left(x_1,x_2\right)\right)-D_{\vec{a}}\left[h\right]_i\left(R_{\beta} \circ h^{-1}\left(x_1,x_2\right)\right)\right| \leq |||h|||_{k+1} \cdot \left|\alpha-\beta\right|.
\end{align*}
Since $\left(h_n \circ R_{\alpha} \circ h^{-1}_n\right)^{-1}= h_n \circ R_{-\alpha} \circ h^{-1}_n$ is of the same form, we obtain in conclusion:
\begin{align*}
d_k\left(h \circ R_{\alpha} \circ h^{-1}, h \circ R_{\beta} \circ h^{-1}\right) & \leq \left(k+1\right)! \cdot |||h|||_{k+1} \cdot |||h|||^k_{k} \cdot \left| \alpha - \beta \right| \\
& \leq \left(k+1\right)! \cdot |||h|||^{k+1}_{k+1} \cdot \left| \alpha - \beta \right|.
\end{align*}
\end{proof}
Under some conditions on the proximity of $\alpha_n$ and $\alpha_{n+1}$ we can prove convergence:
\begin{lem} \label{lem:conv}
We assume 
\begin{equation} \tag{A1} \label{eq:alpha}
\left|\alpha_{n+1} - \alpha_n \right| \leq \frac{1}{2^n \cdot \left(n+1\right)! \cdot q_n \cdot |||H_n|||^{n + 1}_{n +1}}.
\end{equation}
Then the diffeomorphisms $f_n = H_n \circ R_{\alpha_{n+1}} \circ H^{-1}_n$ satisfy:
\begin{itemize}
	\item The sequence $\left(f_n\right)_{n \in \mathbb{N}}$ converges in the Diff$^{\infty}\left(\mathbb{T}^2\right)$-topology to a measure-preserving diffeomorphism $f$.
	\item We have for every $n \in \mathbb{N}$ and $m \leq q_{n+1}$:
	\begin{equation*}
	d_0\left(f^{m}, f^{m}_n\right) < \frac{1}{2^n}.
	\end{equation*}
\end{itemize}
\end{lem}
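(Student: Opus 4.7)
The key observation underlying my plan is that $h_{n+1}$ is built to commute with $R_{p_{n+1}/q_{n+1}}=R_{\alpha_{n+1}}$, and because $\{R_t\}$ is a one-parameter group, with every $R_{m\alpha_{n+1}}$. This lets me rewrite
\[
f_n^m \;=\; H_n\circ R_{m\alpha_{n+1}}\circ H_n^{-1} \;=\; H_{n+1}\circ R_{m\alpha_{n+1}}\circ H_{n+1}^{-1},
\]
so that $f_n^m$ and $f_{n+1}^m=H_{n+1}\circ R_{m\alpha_{n+2}}\circ H_{n+1}^{-1}$ become conjugates of rotations by the \emph{same} diffeomorphism $H_{n+1}$. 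This reconjugation is the crux of the argument: without it, Lemma \ref{lem:conj} cannot be applied to $(f_n,f_{n+1})$ and the estimate would pick up an unmanageable cross term $|||H_n|||\cdot|||H_{n+1}|||$ that would overwhelm the calibration in (A1).

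For the $C^\infty$-convergence, I apply Lemma \ref{lem:conj} with $h=H_{n+1}$ and insert assumption (A1) at index $n+1$:
\[
d_k(f_n,f_{n+1})\;\le\;(k+1)!\,|||H_{n+1}|||_{k+1}^{k+1}\,|\alpha_{n+1}-\alpha_{n+2}|\;\le\;\frac{(k+1)!}{2^{n+1}\,(n+2)!\,q_{n+1}}\;\le\;2^{-(n+1)}
\]
for all $n\ge k-1$, since $|||H_{n+1}|||_{k+1}^{k+1}\le|||H_{n+1}|||_{n+2}^{n+2}$ is precisely the factor cancelled by the denominator in (A1). Summability in $n$ makes $(f_n)$ Cauchy in every $d_k$; completeness of $d_k$ together with the fact that it controls the inverse places the limit $f$ in $\text{Diff}^\infty(\mathbb{T}^2)$. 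Finally, each $h_n=g_n\circ\phi_n$ has unit Jacobian (vertical shift composed with horizontal shear), so every $f_n$ is Lebesgue-preserving, and this passes to the $C^0$-limit.

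For the iterate bound, Lemma \ref{lem:conj} with $k=0$ applied to the common-conjugator representation above yields
\[
d_0(f_j^m,f_{j+1}^m)\;\le\;|||H_{j+1}|||_1\cdot m\cdot|\alpha_{j+1}-\alpha_{j+2}|.
\]
The decisive point is that for every $j\ge n$ we have $m\le q_{n+1}\le q_{j+1}$, so after substituting (A1) the ratio $m/q_{j+1}\le 1$, and the factor $|||H_{j+1}|||_1$ is safely absorbed by $|||H_{j+1}|||_{j+2}^{j+2}$, leaving $d_0(f_j^m,f_{j+1}^m)\le 2^{-(j+1)}$. Telescoping from $j=n$ onwards gives $d_0(f^m,f_n^m)\le\sum_{j\ge n}2^{-(j+1)}=2^{-n}$, with strictness coming from any slack present in (A1). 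The only genuine obstacle is the initial reconjugation identity; once that is in place the entire lemma is a direct calibration of Lemma \ref{lem:conj} against assumption (A1), whose exponent $n+1$ has been chosen precisely so as to dominate the $d_k$-exponent $k+1$ uniformly for each fixed $k$ while simultaneously leaving room to absorb the factor $m\le q_{n+1}$ in the iterate estimate.
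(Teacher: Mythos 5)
Your proof is correct and follows essentially the same route as the paper: the identity $f_n^m=H_{n+1}\circ R_{m\alpha_{n+1}}\circ H_{n+1}^{-1}$ (the paper writes the equivalent statement $f_{n-1}=H_n\circ R_{\alpha_n}\circ H_n^{-1}$, using $h_n\circ R_{\alpha_n}=R_{\alpha_n}\circ h_n$) followed by Lemma \ref{lem:conj}, the calibration against (A1), and the telescoping sum with the factor $m/q_{j+1}\le 1$ are exactly the paper's argument up to an index shift. The only cosmetic difference is that you bound $(k+1)!\,|||H_{n+1}|||_{k+1}^{k+1}$ by $(n+2)!\,|||H_{n+1}|||_{n+2}^{n+2}$ directly, whereas the paper first uses $d_k\le d_n$ and applies the conjugation lemma at level $n$; these are the same estimate.
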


\begin{proof}
\begin{enumerate}
	\item According to our construction it holds $h_n \circ R_{\alpha_n} = R_{\alpha_n} \circ h_n$ and hence we can apply Lemma \ref{lem:conj} for every $k,n \in \mathbb{N}$:
	\begin{equation*}
	d_k\left(f_n, f_{n-1}\right) = d_k\left(H_n \circ R_{\alpha_{n+1}} \circ H^{-1}_{n}, H_n \circ R_{\alpha_n} \circ H^{-1}_{n}\right) \leq C_k \cdot ||| H_n |||^{k+1}_{k+1} \cdot \left| \alpha_{n+1} - \alpha_n \right|.
	\end{equation*}
	By the assumptions of this Lemma it follows for every $k \leq n$:
	\begin{equation} \label{est4}
	d_k\left(f_n,f_{n-1}\right) \leq d_{n}\left(f_n,f_{n-1}\right) \leq C_{n} \cdot ||| H_n |||^{n+1}_{n +1} \cdot \frac{1}{2^n \cdot C_{n} \cdot q_n \cdot ||| H_n|||^{n+1}_{n +1} } < \frac{1}{2^n}.
	\end{equation}
	In the next step we show that for arbitrary $k \in \mathbb{N}$ $\left(f_n\right)_{n \in \mathbb{N}}$ is a Cauchy sequence in Diff$^k\left(\mathbb{T}^2\right)$, i.e. $\lim_{n,m\rightarrow \infty} d_k\left(f_n,f_m\right) = 0$. For this purpose, we calculate:
	\begin{equation} \label{est2}
	\lim_{n \rightarrow \infty} d_k\left(f_n,f_m\right) \leq \lim_{n \rightarrow \infty} \sum^{n}_{i=m+1} d_k\left(f_i, f_{i-1}\right) = \sum^{\infty}_{i=m+1}d_k\left(f_i, f_{i-1}\right).
	\end{equation}
	We consider the limit process $ m \rightarrow \infty$, i.e. we can assume $k\leq m$ and obtain from equations \ref{est4} and \ref{est2}:
	\begin{equation*}
	\lim_{n,m \rightarrow \infty} d_k\left(f_n,f_m\right) \leq \lim_{m \rightarrow \infty} \sum^{\infty}_{i=m+1} \frac{1}{2^i} = 0.
	\end{equation*}
	Since Diff$^{k}\left(\mathbb{T}^2\right)$ is complete, the sequence $\left(f_n\right)_{n \in \mathbb{N}}$ converges consequently in Diff$^k\left(\mathbb{T}^2\right)$ for every $k \in \mathbb{N}$. Thus, the sequence converges in Diff$^{\infty}\left(\mathbb{T}^2\right)$ by definition. 
	\item Again with the help of Lemma \ref{lem:conj} we compute for every $i \in \mathbb{N}$:
\begin{equation*}
d_0\left(f^{m}_{i},f^{m}_{i-1}\right) = d_0\left(H_{i} \circ R_{m \cdot \alpha_{i+1}} \circ H^{-1}_{i},  H_{i} \circ R_{m \cdot \alpha_{i}} \circ H^{-1}_{i}\right) \leq ||| H_{i} |||_1 \cdot m  \cdot \left|\alpha_{i+1}-\alpha_{i}\right|.
\end{equation*}
Since $m \leq q_{n+1} \leq q_i$ we conclude for every $i > n$:
\begin{equation*}
d_0 \left( f^{m}_{i}, f^{m}_{i-1}\right) \leq ||| H_i |||_1 \cdot m \cdot \frac{1}{2^i \cdot \left(i+1\right)! \cdot q_i \cdot ||| H_i |||^{i +1}_{i +1}} < \frac{m}{q_i} \cdot \frac{1}{2^i} \leq \frac{1}{2^i}.
\end{equation*}
Thus, for every $m\leq q_{n+1}$ we get the claimed result:
\begin{equation*}
d_0\left(f^{m}, f^{m}_{n}\right) = \lim_{k \rightarrow \infty} d_0\left(f^{m}_{k}, f^{m}_{n}\right) \leq \lim_{k \rightarrow \infty} \sum^{k}_{i=n+1} d_0\left(f^{m}_{i}, f^{m}_{i-1}\right) < \sum^{\infty}_{i=n+1} \frac{1}{2^{i}} = \left(\frac{1}{2}\right)^n.
\end{equation*}
\end{enumerate}
\end{proof}
\begin{rem} \label{rem:rigid}
By definition $\tilde{q}_{n+1} \leq q_{n+1}$. Hence, the second statement of the previous Lemma implies $d_0\left(f^{\tilde{q}_{n+1}}_n, f^{\tilde{q}_{n+1}}\right)<\frac{1}{2^n}$. Since the number $\alpha_{n+1}$ was chosen in such a way that $f^{\tilde{q}_{n+1}}_n = id$, we have $d_0\left(id, f^{\tilde{q}_{n+1}}\right)<\frac{1}{2^n}$, which goes to zero as $n\rightarrow \infty$. Thus, $\left(\tilde{q}_{n}\right)_{n \in \mathbb{N}}$ is an uniform rigidity sequence of $f$.
\end{rem}
By Lemma \ref{lem:H} we can satisfy the requirement \ref{eq:alpha} if we demand:
\begin{equation*}
\left|\alpha_{n+1}-\alpha_n\right|\leq\frac{1}{2^n \cdot  \left(n+1\right)! \cdot q_n \cdot \left(\left(n + 2\right)!\right)^{\left(n+2\right)^{n-2} \cdot \left(n +1\right)} \cdot \left(2 \pi n q_n\right)^{\left(n+2\right) \cdot \left(n+1\right)^{n+1}}  }.
\end{equation*}
Since $\left|\alpha_{n+1}-\alpha_n\right| = \frac{a_n}{q_n \cdot \tilde{q}_{n+1}}\leq \frac{1}{\tilde{q}_{n+1}}$ this requirement can be met if we demand
\begin{equation*}
\tilde{q}_{n+1} \geq  \varphi_1\left(n\right) \cdot q^{\left(n+2\right) \cdot \left(n+1\right)^{n+1}+1}_n,
\end{equation*}
at which $\varphi_1\left(n\right) \coloneqq 2^{n} \cdot \left(n+1\right)! \cdot \left(\left(n + 2\right)!\right)^{\left(n+2\right)^{n-2} \cdot \left(n +1\right)} \cdot \left(2 \pi n\right)^{\left(n+2\right) \cdot \left(n+1\right)^{n+1}}$. Hereby, the other conditions \ref{eq:C1diff} and \ref{eq:billig} are fulfilled. \\
Using $q_n = q_{n-1} \cdot  \tilde{q}_n < \tilde{q}^2_n$ this yields the condition
\begin{equation*}
\tilde{q}_{n+1} \geq \varphi_1\left(n\right) \cdot  \tilde{q}^{2 \cdot \left(\left(n+2\right) \cdot \left(n+1\right)^{n+1} +1\right)}_n.
\end{equation*}

\section{Convergence of $\left(f_n\right)_{n \in \mathbb{N}}$ in Diff$^{\omega}_{\rho}\left(\mathbb{T}^2\right)$} \label{sec:ana}
Let $\rho>0$ be given.
\subsection{Properties of the conjugation maps $h_n$ and $H_n$}
Regarded as a function $h_n: \mathbb{C}^2 \rightarrow \mathbb{C}^2$ we have
\begin{align*}
& h^{-1}_n\left(\theta,r\right) = h^{-1}_n\left(\theta_1 + \imath \cdot \theta_2,r_1 + \imath \cdot r_2\right) \\
= & \left(\theta_1 + \imath \cdot \theta_2- \left[nq^{\sigma}_n\right]\cdot \left(r_1 + \imath \cdot r_2\right), r_1 + \imath \cdot r_2 - q^2_n \cdot \cos\left(2 \pi q_n\left(\theta_1 + \imath \cdot \theta_2 - \left[nq^{\sigma}_n\right]\cdot \left(r_1 + \imath \cdot r_2\right)\right)\right)\right).
\end{align*}
Since for $\left(\theta_1 + \imath \cdot \theta_2, r_1 + \imath \cdot r_2\right) \in A_{\rho}$ it holds $- \rho < r_2 < \rho$ and $- \rho < \theta_2 < \rho$, we can estimate:
\begin{align*}
\inf_{k \in \mathbb{Z}} \sup_{\left(\theta,r\right) \in A_{\rho}} \left|\left[h^{-1}_n\right]_1 - \theta + k \right| & = \inf_{k \in \mathbb{Z}} \sqrt{\left( - \left[nq^{\sigma}_n\right]\cdot r_1+k\right)^2 + \left( - \left[nq^{\sigma}_n\right]\cdot r_2\right)^2} \\
&\leq \left[nq^{\sigma}_n\right] \cdot \sqrt{1+\rho^2} 
\end{align*}
and
\begin{align*}
\inf_{k \in \mathbb{Z}} \sup_{\left(\theta,r\right) \in A_{\rho}} \left|\left[h^{-1}_n\right]_2- r + k \right| \leq 2 \cdot q^2_n \cdot \exp\left( 2 \pi q_n \cdot \rho + 2 \pi q_n \cdot \left[nq^{\sigma}_n\right] \cdot \rho\right).
\end{align*}
Hence, it holds (note that we demand $q_1 \geq \tilde{\rho}_0 = \rho +1$ in equation \ref{eq:rho}):
\begin{equation} \label{eq:rho_n}
\left\|h^{-1}_n\right\|_{\rho} \leq 2 \cdot q^2_n \cdot \exp\left( 2 \pi \cdot q_n \cdot \rho \cdot \left(1+ \left[nq^{\sigma}_n\right] \right)\right).
\end{equation}
We introduce the subsequent quantities:
\begin{align*}
& \rho_n \coloneqq \left\|H^{-1}_n\right\|_{\rho}, \ \  & \rho_0 \coloneqq \rho \\
& \tilde{\rho}_n \coloneqq 2 \cdot q^2_n \cdot \exp\left( 2 \pi \cdot q_n \cdot \tilde{\rho}_{n-1} \cdot \left(1+ \left[nq^{\sigma}_n\right] \right)\right),\ \  & \tilde{\rho}_0 \coloneqq \rho+1 \end{align*}
Using equation \ref{eq:rho_n} we obtain
\begin{equation*}
\rho_n = \left\|h^{-1}_n \circ H^{-1}_{n-1}\right\|_{\rho} \leq \left\|h^{-1}_n\right\|_{\rho_{n-1}} \leq 2 \cdot q^2_n \cdot \exp\left(  2 \pi q_n \cdot \rho_{n-1} \cdot \left(1+\left[nq^{\sigma}_n\right]\right)\right).
\end{equation*}
We state the following relation between the quantities:
\begin{rem} \label{rem:plus}
We have $\rho_n +1 \leq \tilde{\rho}_n$ for every $n \in \mathbb{N}$.
\end{rem}
\begin{proof}
We prove this result by induction on $n$:
\begin{itemize}
	\item \textit{Start:} $n=1$ \\
	By the above formula we verify:
	\begin{align*}
	\tilde{\rho}_1 & = 2 \cdot q^2_1 \cdot \exp\left( 2 \pi \cdot q_1 \cdot \tilde{\rho}_0 \cdot \left(1+ \left[q^{\sigma}_1\right] \right)\right) \\
	& = 2 \cdot q^2_1 \cdot \exp\left( 2 \pi \cdot q_1 \cdot \rho \cdot \left(1+ \left[q^{\sigma}_1\right] \right)\right) \cdot \exp\left( 2 \pi \cdot q_1 \cdot \left(1+ \left[q^{\sigma}_1\right]\right)\right) \\
	& \geq 2 \cdot q^2_1 \cdot \exp\left( 2 \pi \cdot q_1 \cdot \rho \cdot \left(1+ \left[q^{\sigma}_1\right] \right)\right) + 1 \geq \rho_1 + 1. 
	\end{align*}
	\item \textit{Induction assumption:} The claim holds for $n-1$.
	\item \textit{Induction step:} $n-1 \rightarrow n$ \\
	Using the induction assumption we can compute in the same way:
	\begin{align*}
	\tilde{\rho}_n & = 2 \cdot q^2_n \cdot \exp\left( 2 \pi \cdot q_n \cdot \tilde{\rho}_{n-1} \cdot \left(1+ \left[nq^{\sigma}_n\right] \right)\right) \\
	& \geq 2 \cdot q^2_n \cdot \exp\left( 2 \pi \cdot q_n \cdot \rho_{n-1} \cdot \left(1+ \left[nq^{\sigma}_n\right] \right)\right) \cdot \exp\left( 2 \pi \cdot q_n \cdot \left(1+ \left[nq^{\sigma}_n\right]\right)\right) \\
	& \geq 2 \cdot q^2_n \cdot \exp\left( 2 \pi \cdot q_n \cdot \rho_{n-1} \cdot \left(1+ \left[nq^{\sigma}_n\right] \right)\right) + 1 \geq \rho_n + 1.
	\end{align*}
\end{itemize}
\end{proof}
We demand
\begin{equation} \tag{B2'} \label{eq:rho}
q_{n+1} \geq \tilde{\rho}_n. 
\end{equation}
This yields the condition: $q_{n+1} \geq 2 \cdot q^2_n \cdot \exp\left(2 \pi q_n \cdot \tilde{\rho}_{n-1} \cdot \left(1+\left[nq^{\sigma}_n\right]\right)\right)$. And since we demand $\tilde{\rho}_{n-1} \leq q_n$ we require for the sequence $\left(q_n\right)_{n \in \mathbb{N}}$:
\begin{equation} \tag{B2} \label{eq:rhoc}
q_{n+1} \geq 2 \cdot q^2_n \cdot \exp\left(2 \pi \cdot q^2_n \cdot \left(1+n q^{\sigma}_n\right)\right).
\end{equation}

Furthermore, recall that
\begin{equation*}
h_n\left(\theta,r\right) = \left(\theta + \left[nq^{\sigma}_n\right]\cdot r + \left[nq^{\sigma}_n\right] \cdot q^2_n \cdot \cos\left(2 \pi q_n \theta\right), r + q^2_n \cdot \cos\left(2 \pi q_n \theta\right)\right).
\end{equation*}
The occurring partial derivatives are
\begin{align*}
& \frac{\partial \left[h_n\right]_1}{\partial \theta} = 1-\left[nq^{\sigma}_n\right] \cdot 2 \pi \cdot q^3_n \cdot \sin\left(2 \pi q_n \theta\right) \ & \frac{\partial \left[h_n\right]_1}{\partial r} = \left[nq^{\sigma}_n\right] \\
& \frac{\partial \left[h_n\right]_2}{\partial \theta} = -2 \pi \cdot q^3_n \cdot \sin\left(2 \pi q_n \theta\right) \ & \frac{\partial \left[h_n\right]_2}{\partial r} =1
\end{align*}
Thus, in order to calculate $\left\|Dh_n\right\|_{\rho}$, we have to examine $\left\|\frac{\partial \left[h_n\right]_1}{\partial \theta}\right\|_{\rho}$:
\begin{equation*}
\left\|Dh_n\right\|_{\rho} = \left\|\frac{\partial \left[h_n\right]_1}{\partial \theta}\right\|_{\rho} \leq 1 + \left[nq^{\sigma}_n\right] \cdot 2 \pi \cdot q^3_n \cdot \exp\left(2 \pi \cdot q_n \cdot \rho\right) \leq  4 \pi \cdot n \cdot q^{3+\sigma}_n \cdot \exp\left(2 \pi \cdot q_n \cdot \rho\right).
\end{equation*}
Under condition \ref{eq:rho} we can estimate with the aid of Remark \ref{rem:plus}:
\begin{align} \label{align:Dh}
\left\|Dh_n\right\|_{\rho_n +1} & \leq 4 \pi n \cdot q^{3+\sigma}_n \cdot \exp\left(2 \pi \cdot q_n \cdot \left(\rho_n+1\right)\right) \leq 4 \pi n \cdot q^{3+\sigma}_n \cdot \exp\left(2 \pi \cdot q_n \cdot \tilde{\rho}_n\right) \notag \\
& \leq 4\pi \cdot n \cdot q^{3 + \sigma}_n \cdot \exp\left(4 \pi \cdot q^3_n \cdot \exp\left(2\pi \cdot q^2_n \cdot \left(1+n \cdot q^{\sigma}_n\right)\right)\right).
\end{align}

In order to be able to apply the criterion for weak mixing \ref{prop:wm} we have the requirement \ref{eq:C1}: $q_{n+1} \geq \left\|DH_n\right\|^2_0$. Using the above calculations we obtain
\begin{equation*}
\left\|DH_n\right\|_0 \leq  2! \cdot \left\|DH_{n-1}\right\|_0 \cdot \left\|Dh_n\right\|_0 \leq 2 \cdot q^{0.5}_n \cdot 4\pi \cdot n \cdot q^{3+\sigma}_n   \leq 8 \pi \cdot n \cdot q^{3.5 + \sigma}_n.
\end{equation*}
So we demand
\begin{equation} \tag{B3} \label{eq:d0c}
q_{n+1}\geq  64 \pi^2 \cdot n^2 \cdot q^{8}_n.
\end{equation}

\subsection{Proof of Convergence}
As a preparatory result we prove the subsequent Lemma:
\begin{lem} \label{lem:prep}
Let $n\in \mathbb{N}$, $m \in \mathbb{N}_0$. Under the condition
\begin{equation*}
\left\|h_n \circ R^m_{\alpha_{n+1}} \circ h^{-1}_n - R^m_{\alpha_n} \right\|_{\rho_{n-1}} < \frac{1}{2^n \cdot \left\|Dh_1\right\|_{\rho_1+1} \cdot ... \cdot \left\|Dh_{n-1}\right\|_{\rho_{n-1}+1}}
\end{equation*}
we have $d_{\rho}\left(f^m_n, f^m_{n-1}\right) < \frac{1}{2^n}$.
\end{lem}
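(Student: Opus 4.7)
The plan is to exploit the commutation relation $h_n \circ R_{\alpha_n} = R_{\alpha_n} \circ h_n$ to rewrite the difference in a form directly controlled by the hypothesis. Using $H_n = H_{n-1} \circ h_n$, we factor
\[
f^m_n = H_{n-1} \circ \bigl(h_n \circ R^m_{\alpha_{n+1}} \circ h_n^{-1}\bigr) \circ H_{n-1}^{-1}, \qquad f^m_{n-1} = H_{n-1} \circ R^m_{\alpha_n} \circ H_{n-1}^{-1}.
\]
Setting $y = H_{n-1}^{-1}(z)$ for $z \in A_\rho$, so that $y$ ranges over a subset of $A_{\rho_{n-1}}$ by definition of $\rho_{n-1}$, we obtain
\[
f^m_n(z) - f^m_{n-1}(z) = H_{n-1}\bigl(A(y)\bigr) - H_{n-1}\bigl(B(y)\bigr),
\]
where $A = h_n \circ R^m_{\alpha_{n+1}} \circ h_n^{-1}$ and $B = R^m_{\alpha_n}$. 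The hypothesis says precisely that $|A(y) - B(y)| \leq \|A - B\|_{\rho_{n-1}}$ is bounded by $1/\bigl(2^n \cdot \prod_{i=1}^{n-1}\|Dh_i\|_{\rho_i + 1}\bigr)$.

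Next, I would bound $|H_{n-1}(A(y)) - H_{n-1}(B(y))|$ by iterating the mean value theorem along the factorization $H_{n-1} = h_1 \circ h_2 \circ \cdots \circ h_{n-1}$. Write $u_j = h_{j+1} \circ \cdots \circ h_{n-1}(A(y))$ and $v_j = h_{j+1} \circ \cdots \circ h_{n-1}(B(y))$, so that $u_{n-1} = A(y)$, $v_{n-1} = B(y)$, and $u_0 = H_{n-1}(A(y))$, $v_0 = H_{n-1}(B(y))$. Applying MVT to each $h_j$ in succession yields
\[
|u_{j-1} - v_{j-1}| = |h_j(u_j) - h_j(v_j)| \leq \|Dh_j\|_{\rho_j+1} \cdot |u_j - v_j|,
\]
and multiplying through gives
\[
|H_{n-1}(A(y)) - H_{n-1}(B(y))| \leq \Bigl(\prod_{i=1}^{n-1} \|Dh_i\|_{\rho_i+1}\Bigr) \cdot |A(y) - B(y)| < \frac{1}{2^n}.
\]
Taking the supremum over $z \in A_\rho$ (and noting that the infimum over integer shifts in the definition of $d_\rho$ only makes things smaller) delivers $d_\rho(f^m_n, f^m_{n-1}) < 1/2^n$.

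The main technical obstacle is verifying that the domain tracking in the iterated MVT is legitimate, i.e.\ that at each step the straight-line segment between $u_j$ and $v_j$ lies within $A_{\rho_j+1}$, the natural domain of holomorphicity of $h_j$ on which $\|Dh_j\|_{\rho_j+1}$ is an operating bound. For $j = n-1$ this is immediate since $y \in A_{\rho_{n-1}}$ and both $A(y)$ and $B(y)$ are within $o(1)$ of $y$ by the hypothesis. Inductively, having placed $u_j,v_j$ in $A_{\rho_j + 1}$, their images under $h_j$ land within a buffer of $A_{\rho_{j-1}}$ (using that $H_{j-1}^{-1}$ is the relevant inverse and that $\rho_j + 1 \leq \tilde\rho_j \leq q_{j+1}$ by Remark \ref{rem:plus} and condition \ref{eq:rho}), with the perturbation at each stage still bounded by the minuscule quantity produced by the hypothesis. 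The $+1$ safety buffer in $\rho_i + 1$ is precisely what prevents the accumulated perturbations from exhausting the margin of analyticity, which is why the hypothesis is formulated with the norms $\|Dh_i\|_{\rho_i+1}$ rather than $\|Dh_i\|_{\rho_i}$.
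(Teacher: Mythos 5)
Your proof is correct and is essentially the paper's own argument: the paper packages the same telescoping through $H_{n-1}=h_1\circ\cdots\circ h_{n-1}$ as a two-way induction on $k$ via the intermediate maps $\psi_{n,k}\circ H_k^{-1}$ (which coincide with your $u_k$, $v_k$), contracting by one factor $\left\|Dh_k\right\|_{\rho_k+1}$ per step exactly as in your iterated mean value theorem. Your closing discussion of the domain tracking and of the role of the $+1$ buffer is, if anything, more explicit than what the paper records.
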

\begin{proof}
We introduce the functions $\psi_{n,k} \coloneqq h_{k+1} \circ ... \circ h_n \circ R^{m}_{\alpha_{n+1}} \circ h^{-1}_n ... \circ h^{-1}_{k+1}$ in case of $1 \leq k \leq n-1$ and $\psi_{n,n} = R^m_{\alpha_{n+1}}$. With these we have $f^m_n= h_1 \circ ... \circ h_k \circ \psi_{n,k} \circ h^{-1}_k \circ ... \circ h^{-1}_1$. By induction on $k \in \mathbb{N}$ in the range $1 \leq k \leq n-1$ we prove: \\
\textbf{Fact: } Under the condition $\left\|\psi_{n,k} \circ H^{-1}_k - \psi_{n-1,k} \circ H^{-1}_k\right\|_{\rho} < \frac{1}{2^n \cdot \left\|Dh_1\right\|_{\rho_1+1} \cdot ... \cdot \left\|Dh_{k}\right\|_{\rho_{k}+1}}$ we have \begin{equation*}
\left\|h_1 \circ ... \circ h_k \circ \psi_{n,k} \circ h^{-1}_k \circ ... \circ h^{-1}_1 - h_1 \circ ... \circ h_k \circ \psi_{n-1,k} \circ h^{-1}_k \circ ... \circ h^{-1}_1\right\|_{\rho} < \frac{1}{2^n}.
\end{equation*} 
\begin{itemize}
	\item \textit{Start: $k=1$} \\
	At first we note $h^{-1}_1\left(A^{\rho}\right)\subseteq A^{\rho_1}$. By our assumption we have 
	\begin{equation*}
	\left\|\psi_{n,1} \circ h^{-1}_1 - \psi_{n-1,1}\circ h^{-1}_1\right\|_{\rho} < \frac{1}{2^n \cdot \left\|Dh_1\right\|_{\rho_1+1}} < 1.
	\end{equation*}
	So a sufficient condition for our claim is given by 
	\begin{equation*}
	\left\|Dh_1\right\|_{\rho_1 +1} \cdot \left\|\psi_{n,1} \circ h^{-1}_1 - \psi_{n-1,1} \circ h^{-1}_1\right\|_{\rho}< \frac{1}{2^n},
	\end{equation*}
	which is satisfied by our requirements.
	\item \textit{Induction hypothesis: } The claim holds for $1\leq k-1 \leq n-2$.
	\item \textit{Induction step: $k-1 \rightarrow k$} \\
	Using the induction hypothesis the proximity 
	\begin{align*}
	\left\|\psi_{n,k-1} \circ H^{-1}_{k-1}-\psi_{n-1,k-1} \circ H^{-1}_{k-1}\right\|_{\rho} & = \left\|h_k \circ \psi_{n,k} \circ H^{-1}_k-h_k \circ \psi_{n-1,k} \circ H^{-1}_k \right\|_{\rho} \\
	& <\frac{1}{2^n \cdot \left\|Dh_1\right\|_{\rho_1+1} \cdot ... \cdot \left\|Dh_{k-1}\right\|_{\rho_{k-1}+1}} 
	\end{align*}
	is sufficient to prove the claim. Since $\left\|\psi_{n,k} \circ H^{-1}_k - \psi_{n-1,k} \circ H^{-1}_{k}\right\|_{\rho} < 1$ this is fulfilled if 
	\begin{equation*}
	\left\|Dh_k\right\|_{\rho_{k}+1} \cdot \left\|\psi_{n,k} \circ H^{-1}_k- \psi_{n-1,k}\circ H^{-1}_k\right\|_{\rho} <\frac{1}{2^n \cdot \left\|Dh_1\right\|_{\rho_1+1} \cdot ... \cdot \left\|Dh_{k-1}\right\|_{\rho_{k-1}+1}}.
	\end{equation*}
	By our assumption on $\left\|\psi_{n,k}\circ H^{-1}_k - \psi_{n-1,k}\circ H^{-1}_k\right\|_{\rho}$ the claim is true.
\end{itemize}
In the opposite direction we show that our assumption on $\left\|h_n \circ R^m_{\alpha_{n+1}} \circ h^{-1}_n - R^m_{\alpha_n} \right\|_{\rho_{n-1}}$ implies the conditions $\left\|\psi_{n,k}\circ H^{-1}_k - \psi_{n-1,k}\circ H^{-1}_k\right\|_{\rho} < \frac{1}{2^n \cdot \left\|Dh_1\right\|_{\rho_1+1} \cdot ... \cdot \left\|Dh_{k}\right\|_{\rho_{k}+1}}$:
\begin{itemize}
	\item \textit{Start: $k=n-1$} \\
	The condition on $\left\|\psi_{n,n-1}\circ H^{-1}_{n-1} - \psi_{n-1,n-1}\circ H^{-1}_{n-1}\right\|_{\rho} \leq \left\|h_n \circ R^m_{\alpha_{n+1}} \circ h^{-1}_n - R^m_{\alpha_n} \right\|_{\rho_{n-1}}$ is exactly the supposition of the Lemma.
	\item \textit{Induction hypothesis: } The claim holds for $2\leq k \leq n-1$.
	\item \textit{Induction step: $k \rightarrow k-1$} \\
	We estimate with the aid of our induction hypothesis
	\begin{align*}
	\left\|\psi_{n,k-1} \circ H^{-1}_{k-1} - \psi_{n-1,k-1} \circ H^{-1}_{k-1} \right\|_{\rho} &= \left\|h_k \circ \psi_{n,k} \circ H^{-1}_k - h_k \circ \psi_{n-1,k} \circ H^{-1}_k\right\|_{\rho} \\
	& \leq \left\|Dh_k\right\|_{\rho_k+1} \cdot \left\|\psi_{n,k} \circ H^{-1}_k - \psi_{n-1,k} \circ H^{-1}_k\right\|_{\rho} \\
	& < \left\|Dh_k\right\|_{\rho_k+1} \cdot \frac{1}{2^n \cdot \left\|Dh_1\right\|_{\rho_1+1} \cdot ... \cdot \left\|Dh_{k}\right\|_{\rho_{k}+1}} \\
	& = \frac{1}{2^n \cdot \left\|Dh_1\right\|_{\rho_1+1} \cdot ... \cdot \left\|Dh_{k-1}\right\|_{\rho_{k-1}+1}}.
	\end{align*}
\end{itemize}
Hence, the requiremts of the fact are met and the Lemma is proven.
\end{proof}

Now we are able to deduce the aimed statement on convergence of $\left(f_n\right)_{n \in \mathbb{N}}$ in the Diff$^{\omega}_{\rho}\left(\mathbb{T}^2\right)$-topology:
\begin{lem} \label{lem:convc}
We assume that
\begin{equation} \tag{B1'} \label{eq:alphaB}
\left|\alpha_{n+1} - \alpha_n \right| < \frac{1}{2^n\cdot \left\|Dh_1\right\|_{\rho_1+1} \cdot ... \cdot \left\|Dh_{n-1}\right\|_{\rho_{n-1}+1} \cdot 4 \pi n \cdot q^{4 + \sigma}_n \cdot \exp\left(4 \pi n \cdot q^{1+\sigma}_n \cdot \tilde{\rho}_{n-1}\right)}.
\end{equation}
Then the diffeomorphisms $f_n = H_n \circ R_{\alpha_{n+1}} \circ H^{-1}_n$ satisfy:
\begin{itemize}
	\item The sequence $\left(f_n\right)_{n \in \mathbb{N}}$ converges in the Diff$^{\omega}_{\rho}\left(\mathbb{T}^2\right)$-topology to a measure-preserving diffeomorphism $f$.
	\item We have for every $n \in \mathbb{N}$ and $m \leq q_{n+1}$:
	\begin{equation*}
	d_0\left(f^{m}, f^{m}_n\right) < \frac{1}{2^n}.
	\end{equation*}
\end{itemize}
\end{lem}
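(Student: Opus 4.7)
The plan is to mirror the proof of Lemma \ref{lem:conv}, but with Lemma \ref{lem:prep} playing the role of the direct $C^k$-norm estimates. Both claims of the Lemma reduce, via Lemma \ref{lem:prep}, to controlling the single quantity
\begin{equation*}
\left\|h_n \circ R^m_{\alpha_{n+1}} \circ h_n^{-1} - R^m_{\alpha_n}\right\|_{\rho_{n-1}},
\end{equation*}
with $m = 1$ for the convergence assertion and $m \leq q_{n+1}$ for the approximation assertion.

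For this estimate, I would exploit that $h_n$ commutes with $R_{\alpha_n}$, so the argument of the norm equals $h_n \circ R^m_{\alpha_{n+1}} \circ h_n^{-1} - h_n \circ R^m_{\alpha_n} \circ h_n^{-1}$, while each rotation shifts only the $\theta$-coordinate. For $z \in A^{\rho_{n-1}}$ and $w = h_n^{-1}(z)$, the holomorphic fundamental theorem of calculus gives
\begin{equation*}
\left|h_n\bigl(w_1 + m\alpha_{n+1},\, w_2\bigr) - h_n\bigl(w_1 + m\alpha_n,\, w_2\bigr)\right| \leq m \cdot |\alpha_{n+1}-\alpha_n| \cdot \sup \left|\partial_\theta h_n\right|,
\end{equation*}
where the supremum runs over the segment joining the two arguments, all of whose points share the imaginary part of $w_1$. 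Since $|\text{im}(w_1)| = |\text{im}(z_1) - [nq_n^\sigma]\cdot \text{im}(z_2)| \leq (1+[nq_n^\sigma])\rho_{n-1}$, Remark \ref{rem:plus} together with the explicit formula for $\partial_\theta h_n$ yields
\begin{equation*}
\left\|h_n \circ R^m_{\alpha_{n+1}} \circ h_n^{-1} - R^m_{\alpha_n}\right\|_{\rho_{n-1}} \leq m \cdot |\alpha_{n+1}-\alpha_n| \cdot 4\pi n \cdot q_n^{3+\sigma} \cdot \exp\bigl(4\pi n \cdot q_n^{1+\sigma} \cdot \tilde{\rho}_{n-1}\bigr).
\end{equation*}

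For the first claim I take $m=1$: hypothesis \ref{eq:alphaB} then forces the right-hand side below $\bigl(2^n \cdot \|Dh_1\|_{\rho_1+1}\cdots\|Dh_{n-1}\|_{\rho_{n-1}+1}\bigr)^{-1}$, so Lemma \ref{lem:prep} gives $d_\rho(f_n, f_{n-1}) < 1/2^n$ and the resulting Cauchy sequence converges in the Banach space $\text{Diff}^{\omega}_{\rho}(\mathbb{T}^2)$ to an area-preserving limit $f$. For the second claim I apply the same estimate at each $i > n$: since $m \leq q_{n+1} \leq q_i$, the factor $m$ combines with \ref{eq:alphaB} at index $i$ (whose extra $q_i$ in the denominator converts the natural $q_i^{3+\sigma}$ into $q_i^{4+\sigma}$) to yield $d_\rho(f_i^m, f_{i-1}^m) < 1/2^i$, and summing over $i > n$ produces $d_0(f^m, f_n^m) \leq \sum_{i>n} 2^{-i} = 2^{-n}$. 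The main technical point is simply bookkeeping the correct strip on which $\|\partial_\theta h_n\|$ has to be controlled; once this bound is isolated, the rest is a direct transcription of the $\text{Diff}^{\infty}$ argument.
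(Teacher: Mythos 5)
Your proposal is correct and follows the same overall architecture as the paper's proof: reduce both assertions to the hypothesis of Lemma \ref{lem:prep}, bound $\left\|h_n \circ R^m_{\alpha_{n+1}} \circ h^{-1}_n - R^m_{\alpha_n}\right\|_{\rho_{n-1}}$ by a constant times $m\cdot\left|\alpha_{n+1}-\alpha_n\right|$, and then run the telescoping/geometric-series argument of Lemma \ref{lem:conv} with $m=1$ for convergence and $m\leq q_{n+1}\leq q_i$ for the approximation claim. The one place you genuinely diverge is the derivation of that central bound: the paper computes $h_n \circ R^m_{\alpha_{n+1}} \circ h^{-1}_n$ in closed form, isolates the auxiliary function $T_m(z)=\cos\left(2\pi q_n\left(z+m\alpha_{n+1}\right)\right)-\cos\left(2\pi q_n z\right)$, and estimates it via the product-to-sum identity together with $\left|\sin x\right|\leq\left|x\right|$ (using that $q_n m\alpha_n\in\mathbb{Z}$), whereas you use the commutation $h_n\circ R_{\alpha_n}=R_{\alpha_n}\circ h_n$ and integrate $\partial_\theta h_n$ along the horizontal segment of length $m\left|\alpha_{n+1}-\alpha_n\right|$. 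Both routes are valid and yield the identical constant $4\pi n q_n^{3+\sigma}\exp\left(4\pi n q_n^{1+\sigma}\tilde{\rho}_{n-1}\right)$; your version correctly handles the two points where care is needed, namely that the segment stays at imaginary height $\left|\text{im}\,w_1\right|\leq\left(1+\left[nq_n^{\sigma}\right]\right)\rho_{n-1}$ (since the shifts are real) and that $\partial_\theta h_n$ depends only on the first coordinate, so the potentially enormous imaginary part of $w_2=\left[h_n^{-1}(z)\right]_2$ never enters. The trigonometric identity in the paper is in effect a sharp mean-value estimate for the cosine, so the two computations are interchangeable here; your softer argument is marginally more robust to changes in the explicit form of $\phi_n$, at the cost of not exhibiting the difference map explicitly.
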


\begin{proof}
At first we introduce for $m \in \mathbb{N}$ the function 
\begin{equation*}
T_m(z) = \cos\left(2 \pi \cdot q_n \cdot \left(z + m \cdot \alpha_{n+1}\right)\right)-\cos\left(2 \pi \cdot q_n \cdot z\right)
\end{equation*}
and exploiting the relation $\cos\left(x\right)-\cos\left(y\right)= 2 \cdot \sin\left(\frac{x+y}{2}\right) \cdot \sin\left(\frac{y-x}{2}\right)$ we can estimate for every $s\geq 0$:
\begin{align*}
\left\|T_m\right\|_s & = \left\|2 \cdot \sin\left(\pi \cdot q_n \cdot \left(2z + m \cdot \alpha_{n+1}\right)\right) \cdot \sin\left(\pi \cdot q_n \cdot m \cdot \alpha_{n+1}\right)\right\|_s \\
& = 2 \cdot \left\|\frac{1}{2\imath}\left(e^{\imath \pi \cdot q_n \cdot \left(2z +m \cdot \alpha_{n+1}\right)}-e^{-\imath \pi \cdot q_n \cdot \left(2z +m \cdot \alpha_{n+1}\right)}\right)\right\|_s \cdot \left|\sin\left(\pi \cdot q_n \cdot m \cdot\alpha_{n+1}\right)\right| \\
& \leq 2 \cdot \left\|e^{2 \pi \imath \cdot q_n \cdot z}\right\|_s \cdot \left|\sin\left(\pi \cdot q_n \cdot m \cdot\left(\alpha_{n+1}-\alpha_n\right)\right)\right| \\
& \leq 2 \cdot \left\|e^{2 \pi \imath \cdot q_n \cdot z}\right\|_s \cdot \pi \cdot q_n \cdot m \cdot \left|\alpha_{n+1}-\alpha_n\right|,
\end{align*}
where we made use of $\left|\sin\left(x\right)\right| \leq \left|x\right|$ in the last step. \\
Using this map $T_m$ we compute
\begin{align*}
& h_n \circ R^m_{\alpha_{n+1}} \circ h^{-1}_n\left(\theta,r\right) = h_n \circ R^m_{\alpha_{n+1}}\left( \theta - \left[nq^{\sigma}_n\right] \cdot r, r - q^2_n \cdot \cos\left(2 \pi \cdot q_n \cdot \left(\theta - \left[nq^{\sigma}_n\right] \cdot r\right)\right)\right) \\
&= h_n\left(\theta + m \cdot\alpha_{n+1} - \left[nq^{\sigma}_n\right] \cdot r, r - q^2_n \cdot \cos\left(2 \pi \cdot q_n \cdot \left(\theta - \left[nq^{\sigma}_n\right] \cdot r\right)\right)\right) \\
&= g_n\left(\theta + m \cdot \alpha_{n+1} - \left[nq^{\sigma}_n\right] \cdot r, r + q^2_n \cdot T_m\left(\theta-\left[nq^{\sigma}_n\right] \cdot r\right)\right) \\
&=\left(\theta + m \cdot \alpha_{n+1} + \left[nq^{\sigma}_n\right] \cdot q^2_n \cdot T_m\left(\theta-\left[nq^{\sigma}_n\right] \cdot r\right), r + q^2_n \cdot T_m\left(\theta-\left[nq^{\sigma}_n\right] \cdot r\right)\right).
\end{align*}
Then we have
\begin{align*}
& h_n \circ R^m_{\alpha_{n+1}} \circ h^{-1}_n\left(\theta,r\right) - R^m_{\alpha_n}\left(\theta,r\right) \\
= & \left(m \cdot \alpha_{n+1} - m \cdot \alpha_n + \left[nq^{\sigma}_n\right] \cdot q^2_n \cdot T_m\left(\theta-\left[nq^{\sigma}_n\right] \cdot r\right),q^2_n \cdot T_m\left(\theta-\left[nq^{\sigma}_n\right] \cdot r\right)\right).
\end{align*}
Thus, we can estimate for $m \leq q_n$:
\begin{align*}
& \left\|h_n \circ R^m_{\alpha_{n+1}} \circ h^{-1}_n \circ H^{-1}_{n-1} - R^m_{\alpha_n}\circ H^{-1}_{n-1} \right\|_{\rho} \leq \left\|h_n \circ R^m_{\alpha_{n+1}} \circ h^{-1}_n  - R^m_{\alpha_n} \right\|_{\rho_{n-1}} \\
& \leq 2 \cdot \left[nq^{\sigma}_n\right] \cdot q^2_n \cdot \left\|T_m\left(\theta-\left[nq^{\sigma}_n\right] \cdot r\right)\right\|_{\rho_{n-1}} \\
& \leq 2 \cdot \left[nq^{\sigma}_n\right] \cdot q^2_n \cdot 2 \cdot \left\|e^{2 \pi \imath \cdot q_n \cdot \left(\theta-\left[nq^{\sigma}_n\right] \cdot r\right)}\right\|_{\rho_{n-1}} \cdot \pi \cdot q_n \cdot m \cdot \left|\alpha_{n+1}-\alpha_n\right| \\
& \leq 4 \cdot \pi \cdot n \cdot q^{3 + \sigma}_n \cdot e^{4 \pi \cdot n \cdot q^{1+\sigma}_n \cdot \rho_{n-1}} \cdot q_n \cdot \left|\alpha_{n+1}-\alpha_n\right| \\
& < \frac{1}{2^n \cdot \left\|Dh_1\right\|_{\rho_1+1} \cdot ... \cdot \left\|Dh_{n-1}\right\|_{\rho_{n-1}+1}}.
\end{align*}
So the prerequisites of Lemma \ref{lem:prep} are fulfilled and we conclude $d_{\rho}\left(f^m_n, f^m_{n-1}\right)< \frac{1}{2^n}$. In the same spirit as in the proof of Lemma \ref{lem:conv} we can show the convergence of $\left(f_n\right)_{n \in \mathbb{N}}$ and the second property.
\end{proof}

Now we formulate the next requirement on the sequence $\left(q_n\right)_{n \in \mathbb{N}}$:
\begin{equation} \tag{B4'} \label{eq:DHc3}
q_{n+1} \geq \left\|Dh_1\right\|_{\rho_1+1} \cdot ... \cdot \left\|Dh_{n-1}\right\|_{\rho_{n-1}+1} \cdot \left\|Dh_{n}\right\|_{\rho_{n}+1}.
\end{equation}
By the requirement $\left\|Dh_1\right\|_{\rho_1+1} \cdot ... \cdot \left\|Dh_{n-1}\right\|_{\rho_{n-1}+1} \leq q_n$ as well as equation \ref{align:Dh} condition \ref{eq:DHc3} is satisfied if we demand
\begin{equation} \tag{B4} \label{eq:DHc2}
q_{n+1} \geq 4 \pi \cdot n \cdot q^{4 + \sigma}_n \cdot \exp\left(4 \pi \cdot q^3_n \cdot \exp\left(2 \pi \cdot q^2_n \cdot \left(1+n \cdot q^{\sigma}_n\right)\right)\right).
\end{equation}

Since $\left|\alpha_{n+1} - \alpha_n \right|=\frac{a_n}{q_n \cdot \tilde{q}_{n+1}} \leq \frac{1}{\tilde{q}_{n+1}}$ condition \ref{eq:alphaB} yields the requirement
\begin{equation*}
\tilde{q}_{n+1} \geq 2^n \cdot \left\|Dh_1\right\|_{\rho_1+1} \cdot ... \cdot \left\|Dh_{n-1}\right\|_{\rho_{n-1}+1} \cdot 4 \pi \cdot n \cdot q^{4+\sigma}_n \cdot \exp\left(4\pi \cdot n \cdot q^{1+\sigma}_n \cdot \tilde{\rho}_{n-1}\right).
\end{equation*}
Using $\tilde{\rho}_{n-1} \leq q_n$ (see condition \ref{eq:rho}) and $\left\|Dh_1\right\|_{\rho_1+1} \cdot ... \cdot \left\|Dh_{n-1}\right\|_{\rho_{n-1}+1} \leq q_n$ (see condition \ref{eq:DHc3}) this requirement is satisfied if we demand
\begin{equation} \tag{B1} \label{eq:convc}
\tilde{q}_{n+1} \geq 2^n \cdot 4 \pi \cdot n \cdot q^{5+\sigma}_n \cdot \exp\left(4\pi \cdot n \cdot q^{2+\sigma}_n \right).
\end{equation}

By collecting all the prerequisites \ref{eq:convc}, \ref{eq:rhoc}, \ref{eq:d0c}, \ref{eq:DHc2} on the sequence $\left(q_n\right)_{n \in \mathbb{N}}$ we demand:
\begin{equation*}
q_{n+1} \geq 2^n \cdot 64\pi^2 \cdot n^2 \cdot q^{8}_n \cdot \exp\left(4\pi \cdot n \cdot q^3_n \cdot \exp\left(2 \pi \cdot q^2_n \cdot \left(1+n \cdot q^{\sigma}_n\right)\right)\right).
\end{equation*}
Since $\tilde{q}_{n+1} = \frac{q_{n+1}}{q_n}$, $q_n = q_{n-1} \cdot \tilde{q}_n \leq \tilde{q}^2_n$ and $0< \sigma < \frac{1}{2}$ we obtain the following sufficient condition on the growth rate of the rigidity sequence $\left(\tilde{q}_n\right)_{n \in \mathbb{N}}$:
\begin{equation*}
\tilde{q}_{n+1} \geq 2^n \cdot 64\pi^2 \cdot n^2 \cdot \tilde{q}^{14}_n \cdot \exp\left(4\pi \cdot n \cdot \tilde{q}^6_n \cdot \exp\left(2 \pi \cdot \tilde{q}^4_n \cdot \left(1+n \cdot \tilde{q}_n\right)\right)\right).
\end{equation*}

\section{Proof of weak mixing} \label{section:wm}
By the same approach as in \cite{FS} we want to apply Proposition \ref{prop:wm}. For this purpose, we introduce a sequence $\left(m_n\right)_{n \in \mathbb{N}}$ of natural numbers $m_n \leq q_{n+1}$ in subsection \ref{subsection:m_n} and a sequence $\left(\eta_n\right)_{n \in \mathbb{N}}$ of standard partial decompositions in subsection \ref{subsection:eta}. Finally, we show that the diffeomorphism $\Phi_n \coloneqq \phi_n \circ R^{m_n}_{\alpha_{n+1}} \circ \phi^{-1}_n$ $\left(\frac{1}{nq_n}, 0, \frac{1}{n}\right)$-distributes the elements of this partition.

\subsection{Choice of the mixing sequence $\left(m_n\right)_{n \in \mathbb{N}}$} \label{subsection:m_n}
By condition \ref{eq:C1diff} resp. \ref{eq:d0c} our chosen sequence $\left(q_{n}\right)_{n \in \mathbb{N}}$ satisfies
\begin{equation} \tag{C2} \label{eq:m}
q_{n+1} \geq q^8_n.
\end{equation}
Define
\begin{align*}
m_n & = \min \left\{ m \leq q_{n+1} \ \  : \ \ m \in \mathbb{N},\ \  \inf_{k \in \mathbb{Z}} \left| m \cdot \frac{p_{n+1}}{q_{n+1}} - \frac{1}{2 \cdot q_n} + \frac{k}{q_n}\right| \leq \frac{q_n}{q_{n+1}}\right\} \\
& = \min \left\{ m \leq q_{n+1} \ \ : \ \ m \in \mathbb{N},\ \ \inf_{k \in \mathbb{Z}} \left| m \cdot \frac{q_n \cdot p_{n+1}}{q_{n+1}} - \frac{1}{2} + k \right| \leq \frac{q^2_n}{q_{n+1}}\right\}
\end{align*}
\begin{lem}
The set $M_n := \left\{ m \leq q_{n+1} \ \ : \ \ m \in \mathbb{N},\ \ \inf_{k \in \mathbb{Z}} \left| m \cdot \frac{q_n \cdot p_{n+1}}{q_{n+1}} - \frac{1}{2} + k \right| \leq \frac{q^2_n}{q_{n+1}}\right\}$ is nonempty for every $n \in \mathbb{N}$, i.e. $m_n$ exists.
\end{lem}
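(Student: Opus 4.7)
The plan is to reduce the defining condition of $M_n$ to an elementary residue question modulo $\tilde{q}_{n+1}$. Using $\alpha_{n+1}=\alpha_n-\frac{a_n}{q_n\tilde{q}_{n+1}}$ together with $q_n\alpha_n=p_n\in\mathbb{Z}$, I would first compute
\[
m\cdot\frac{q_n\,p_{n+1}}{q_{n+1}} \;=\; m\,q_n\alpha_{n+1} \;=\; m\,p_n - \frac{m\,a_n}{\tilde{q}_{n+1}}.
\]
Absorbing the integer $m\,p_n$ into the minimizing index $k$, the defining infimum becomes
\[
\inf_{k\in\mathbb{Z}}\left|\frac{m\,a_n}{\tilde{q}_{n+1}}-\frac{1}{2}+k\right| \;=\; \frac{\bigl|\,r_m-\tilde{q}_{n+1}/2\,\bigr|}{\tilde{q}_{n+1}},
\]
where $r_m\in\{0,1,\ldots,\tilde{q}_{n+1}-1\}$ denotes the residue of $m\,a_n$ modulo $\tilde{q}_{n+1}$. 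Since $q_{n+1}=q_n\tilde{q}_{n+1}$, the condition $m\in M_n$ is therefore equivalent to the purely arithmetic requirement $|r_m-\tilde{q}_{n+1}/2|\le q_n$.

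Next I would exploit the subgroup structure of the residues. Setting $d:=\gcd(a_n,\tilde{q}_{n+1})$, as $m$ ranges over the integers the values $r_m$ trace out exactly the arithmetic progression $\{0,d,2d,\ldots,\tilde{q}_{n+1}-d\}$, and any prescribed element of this progression is hit by some $m\in\{1,\ldots,\tilde{q}_{n+1}/d\}$, obtained from a modular inverse of $a_n/d$ modulo $\tilde{q}_{n+1}/d$. Since $1\le a_n\le q_n$ by construction, $d\le a_n\le q_n$, and hence the element of the progression closest to $\tilde{q}_{n+1}/2$ is at distance at most $d/2\le q_n/2<q_n$. The corresponding $m$ satisfies $m\le\tilde{q}_{n+1}/d\le\tilde{q}_{n+1}\le q_{n+1}$ and $|r_m-\tilde{q}_{n+1}/2|\le q_n$, so it lies in $M_n$.

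The main obstacle is the bookkeeping in the first step, namely the identification of the infimum with $|r_m-\tilde{q}_{n+1}/2|/\tilde{q}_{n+1}$: one has to watch the sign of $m\,a_n/\tilde{q}_{n+1}$ modulo $1$, use the symmetry $\mathbb{Z}+\tfrac12 = -(\mathbb{Z}+\tfrac12)$, and confirm that after shifting by a suitable integer the distance to $\mathbb{Z}+\tfrac12$ is attained on the symmetric representative in $\{-\tilde{q}_{n+1}/2,\ldots,\tilde{q}_{n+1}/2\}$. Once this algebraic reduction is carried out, the Bézout-style pigeonhole supplied by the cyclic subgroup $(a_n)\subset\mathbb{Z}/\tilde{q}_{n+1}\mathbb{Z}$ finishes the argument without further effort.
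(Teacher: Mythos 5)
Your proof is correct and follows essentially the same route as the paper: both arguments come down to the fact that the points $m\cdot\frac{q_n p_{n+1}}{q_{n+1}} \bmod 1$ form a cyclic subgroup of $\frac{1}{\tilde q_{n+1}}\mathbb{Z}/\mathbb{Z}$ of index $\gcd(a_n,\tilde q_{n+1})\le a_n\le q_n$, hence are equally spaced with gap at most $q_n/\tilde q_{n+1}=q_n^2/q_{n+1}$, so some point lands within that distance of $\tfrac12$. Your version is in fact slightly more explicit than the paper's, since you justify the bound $\gcd(p_{n+1},\tilde q_{n+1})=\gcd(a_n,\tilde q_{n+1})\le q_n$ that the paper uses tacitly.
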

\begin{pr}
The number $\alpha_{n+1}$ was constructed by the rule $\frac{p_{n+1}}{q_{n+1}}= \frac{p_n}{q_n}-\frac{a_n}{q_n \cdot \tilde{q}_{n+1}}$, where $a_n \in \mathbb{Z}$, $1 \leq a_n \leq q_n$, i.e. $p_{n+1}= p_n \cdot \tilde{q}_{n+1} - a_n$ and $q_{n+1} = q_n \cdot \tilde{q}_{n+1}$. So $\frac{q_n \cdot p_{n+1}}{q_{n+1}}= \frac{p_{n+1}}{\tilde{q}_{n+1}}$ and the set $\left\{ j \cdot \frac{q_n \cdot p_{n+1}}{q_{n+1}} \ \  : \ \ j=1,2,...,q_{n+1} \right\}$ contains $\frac{\tilde{q}_{n+1}}{\text{gcd}\left(p_{n+1}, \tilde{q}_{n+1}\right)}$ different equally distributed points on $\mathbb{S}^1$. Hence, there are at least $\frac{\tilde{q}_{n+1}}{q_n}= \frac{q_{n+1}}{q^2_n}$ different such points and so for every $x \in \mathbb{S}^1$ there is a $j \in \left\{1,...,q_{n+1} \right\}$ such that 
\begin{equation*}
\inf_{k \in \mathbb{Z}} \left| x - j \cdot \frac{q_n \cdot p_{n+1}}{q_{n+1}} + k \right| \leq \frac{q^2_n}{q_{n+1}}.
\end{equation*}
In particular, this is true for $x=\frac{1}{2}$.
\end{pr}

\begin{rem} \label{rem:aneigid}
We define
\begin{equation*}
\Delta_n = \left(m_n \cdot \frac{p_{n+1}}{q_{n+1}} - \frac{1}{2 \cdot q_n}\right) \text{ mod } \frac{1}{q_n}.
\end{equation*}
By the above construction of $m_n$ it holds: $\left|\Delta_n \right| \leq \frac{q_n}{q_{n+1}}$. By \ref{eq:m} we get:
\begin{equation*}
\left|\Delta_n \right| \leq \frac{1}{q^7_n}.
\end{equation*}
\end{rem}

\subsection{Standard partial decomposition $\eta_n$} \label{subsection:eta}
In the following we will consider the set
\begin{equation*}
B_n = \bigcup^{2q_n-1}_{k=0}\left[\frac{k}{2q_n}-\frac{1}{2q^{1.5}_n}, \frac{k}{2q_n}+\frac{1}{2q^{1.5}_n}\right].
\end{equation*}
Our horizontal intervals belonging to the partial decomposition $\eta_n$ will lie outside $B_n$. To show that $\Phi_n$ $\left(\frac{1}{nq_n}, 0, \frac{1}{n}\right)$-distributes the elements of this partition we will need the subsequent result similar to the concept of ``uniformly stretching'' from \cite{F}:
\begin{lem} \label{lem:us}
Let $I=\left[a,b\right] \subset \mathbb{R}$ be an interval and $\psi: I \rightarrow \mathbb{R}$ be a strictly monotonic $C^2$-function. Furthermore, we denote $J \coloneqq \left[ \inf_{x \in I} \psi\left(x\right), \sup_{x \in I} \psi\left(x\right)\right]$. If $\psi$ satisfies
\begin{equation*}
\sup_{x \in I} \left|\psi''\left(x\right)\right| \cdot \lambda\left(I\right) \leq \varepsilon \cdot \inf_{x \in I} \left|\psi'\left(x\right)\right|,
\end{equation*}
then for any interval $\tilde{J} \subseteq J$ we have
\begin{equation*}
\left|\frac{\lambda\left(I \cap \psi^{-1}\left(\tilde{J}\right)\right)}{\lambda\left(I\right)}-\frac{\lambda\left(\tilde{J}\right)}{\lambda\left(J\right)}\right|\leq \varepsilon \cdot \frac{\lambda\left(\tilde{J}\right)}{\lambda\left(J\right)}.
\end{equation*}.
\end{lem}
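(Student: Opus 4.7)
The proof of this lemma is a standard distortion estimate, and the strategy is to reduce it to comparing values of $\psi'$ at two points inside $I$ via the mean value theorem. Without loss of generality one may assume $\psi' > 0$ on $I$ (otherwise replace $\psi$ by $-\psi$, which leaves both sides of the claim invariant). Then $\psi\colon I \to J$ is a $C^2$-diffeomorphism onto its image, and in particular $I' \coloneqq I \cap \psi^{-1}(\tilde{J})$ is again an interval, since $\tilde{J}$ is an interval and $\psi$ is a homeomorphism of $I$ onto $J$.

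The plan is then to apply the mean value theorem to $\psi$ on the two intervals $I$ and $I'$. This yields points $\xi \in I$ and $\xi' \in I'$ with
\begin{equation*}
\lambda(J) = \psi'(\xi)\,\lambda(I), \qquad \lambda(\tilde{J}) = \psi'(\xi')\,\lambda(I').
\end{equation*}
Taking the quotient gives the identity
\begin{equation*}
\frac{\lambda(I')}{\lambda(I)} = \frac{\lambda(\tilde{J})}{\lambda(J)} \cdot \frac{\psi'(\xi)}{\psi'(\xi')},
\end{equation*}
so that
\begin{equation*}
\left|\frac{\lambda(I')}{\lambda(I)} - \frac{\lambda(\tilde{J})}{\lambda(J)}\right| = \frac{\lambda(\tilde{J})}{\lambda(J)} \cdot \frac{|\psi'(\xi) - \psi'(\xi')|}{\psi'(\xi')}.
\end{equation*}

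It remains to bound the factor $|\psi'(\xi) - \psi'(\xi')|/\psi'(\xi')$ by $\varepsilon$. Here one uses the mean value theorem once more, this time applied to $\psi'$ on $I$: since $\xi, \xi' \in I$, one has $|\psi'(\xi) - \psi'(\xi')| \leq \sup_{x \in I}|\psi''(x)| \cdot |\xi - \xi'| \leq \sup_{x \in I}|\psi''(x)| \cdot \lambda(I)$, which by the hypothesis is at most $\varepsilon \cdot \inf_{x \in I}|\psi'(x)| \leq \varepsilon \cdot \psi'(\xi')$. This gives the desired estimate directly, and no step really poses an obstacle — the only subtle point is making sure that the interval structure of $I' = I \cap \psi^{-1}(\tilde{J})$ is used (so that the mean value theorem yields a single $\xi'$) and that the hypothesis is strong enough to dominate the distortion on all of $I$ at once, not just locally.
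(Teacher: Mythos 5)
Your proof is correct and follows essentially the same route as the paper's: reduce to the increasing case, apply the mean value theorem to $\psi$ on $I$ and on the subinterval $I\cap\psi^{-1}(\tilde{J})$ to obtain the two points $\xi,\xi'$, and then bound $\left|\psi'(\xi)-\psi'(\xi')\right|/\psi'(\xi')$ by $\varepsilon$ via a second application of the mean value theorem to $\psi'$ combined with the hypothesis. The only cosmetic differences are that you phrase the comparison as a single absolute-value identity rather than two one-sided inequalities, and you handle the decreasing case by passing to $-\psi$ instead of declaring it analogous.
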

\begin{proof}
We consider the case that $\psi$ is strictly increasing (the proof in the decreasing case is analogous), which implies $\psi'>0$ (due to our assumption $\sup_{x \in I} \left|\psi''\left(x\right)\right| \cdot \lambda\left(I\right) \leq \varepsilon \cdot \inf_{x \in I} \left|\psi'\left(x\right)\right|$) and $J = \left[\psi\left(a\right), \psi\left(b\right)\right]$. \\
Let $\tilde{J} = \left[ \psi\left(c\right),\psi\left(d\right)\right]$, where $a\leq c\leq d \leq b$. By the mean value theorem there are $\xi_1 \in \left[a,b\right]$ and $\xi_2 \in \left[c,d\right]$, such that $\psi\left(b\right)-\psi\left(a\right) = \psi'\left(\xi_1\right) \cdot \left(b-a\right)$ resp. $\psi\left(d\right)-\psi\left(c\right) = \psi'\left(\xi_2\right) \cdot \left(d-c\right)$. Applying the mean value theorem on $\psi'$ gives $\xi_3 \in \left[a,b\right]$ with $\left|\psi'\left(\xi_2\right)-\psi'\left(\xi_1\right)\right|=\left|\psi''\left(\xi_3\right)\right|\cdot \left|\xi_2 - \xi_1\right|$. Then we have:
\begin{equation*}
\left|\psi'\left(\xi_1\right)-\psi'\left(\xi_2\right)\right| \leq \sup_{x \in \left[a,b\right]} \left|\psi''\left(x\right)\right| \cdot \left|b-a\right| = \sup_{x \in \left[a,b\right]} \left|\psi''\left(x\right)\right| \cdot \lambda\left(I\right) \leq \varepsilon \cdot \inf_{x \in \left[a,b\right]} \left|\psi'\left(x\right)\right| \leq \varepsilon \cdot \left|\psi'\left(\xi_2\right)\right|.
\end{equation*}
Hereby, we obtain:
\begin{equation*}
\left|\frac{\psi'\left(\xi_1\right)}{\psi'\left(\xi_2\right)} - 1\right| \leq \varepsilon.
\end{equation*}
Since $\psi'>0$ this implies $1-\varepsilon \leq \frac{\psi'\left(\xi_1\right)}{\psi'\left(\xi_2\right)} \leq 1+\varepsilon$ and thus:
\begin{equation*}
\frac{\lambda\left(I\cap \psi^{-1}\left(\tilde{J}\right)\right)}{\lambda\left(I\right)} = \frac{d-c}{b-a}=\frac{\psi'\left(\xi_1\right) \cdot \left(\psi\left(d\right)-\psi\left(c\right)\right)}{\psi'\left(\xi_2\right) \cdot \left(\psi\left(b\right)-\psi\left(a\right)\right)} \leq \left(1+\varepsilon\right) \cdot \frac{\lambda\left(\tilde{J}\right)}{\lambda\left(J\right)}.
\end{equation*}
This implies
\begin{equation*}
\frac{\lambda\left(I \cap \psi^{-1}\left(\tilde{J}\right)\right)}{\lambda\left(I\right)}-\frac{\lambda\left(\tilde{J}\right)}{\lambda\left(J\right)} \leq \varepsilon \cdot \frac{\lambda\left(\tilde{J}\right)}{\lambda\left(J\right)}.
\end{equation*}
Similarly we obtain the estimate from below and the claim follows.
\end{proof} 

By the explicit definitions of the conjugation maps the transformation $\Phi_n = \phi_n \circ R^{m_n}_{\alpha_{n+1}} \circ \phi^{-1}_n$ has a lift of the form
\begin{equation*}
\Phi_n\left(\theta,r\right)= \left( \theta + m_n \cdot \alpha_{n+1}, r+ \psi_n\left(\theta\right)\right)
\end{equation*}
with
\begin{equation*}
\psi_n\left(\theta\right)= q^2_n \cdot \left( \cos\left(2 \pi \left(q_n \theta + m_n q_n \alpha_{n+1}\right)\right)- \cos\left(2 \pi q_n \theta\right)\right).
\end{equation*}
We examine this map $\psi_n$:
\begin{lem} \label{lem:psi}
The map $\psi_n$ satisfies:
\begin{equation*}
\inf_{ \theta \in \mathbb{T} \setminus B_n} \left| \psi_n'\left(\theta\right)\right| \geq q^{2.5}_n \ \ \ \ \ \text{and} \ \ \ \ \ \ \sup_{ \theta \in \mathbb{T} \setminus B_n} \left| \psi_n''\left(\theta\right)\right| \leq 9 \pi^2 q^4_n.
\end{equation*}
\end{lem}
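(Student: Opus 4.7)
The plan is to exploit the sum-to-product trigonometric identities in order to turn the difference of two cosines in $\psi_n$ into a product, and then to separate the $\theta$-dependence from the $m_n\alpha_{n+1}$-dependence. The point of defining $m_n$ the way it was is precisely that $m_n q_n\alpha_{n+1}$ is extremely close to $\tfrac12$ modulo $1$, so that the factor depending on $m_n\alpha_{n+1}$ is bounded away from zero while the factor depending on $\theta$ essentially becomes $\sin(2\pi q_n\theta)$, which is controlled on $\mathbb{T}\setminus B_n$ by the very choice of $B_n$.

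Concretely, I first differentiate: a direct computation gives
\begin{equation*}
\psi_n'(\theta)=-2\pi q_n^3\bigl(\sin(2\pi(q_n\theta+m_nq_n\alpha_{n+1}))-\sin(2\pi q_n\theta)\bigr).
\end{equation*}
Applying $\sin(x+y)-\sin(x)=2\cos(x+y/2)\sin(y/2)$ with $x=2\pi q_n\theta$ and $y=2\pi m_nq_n\alpha_{n+1}$ rewrites this as
\begin{equation*}
\psi_n'(\theta)=-4\pi q_n^3\cos\bigl(2\pi q_n\theta+\pi m_nq_n\alpha_{n+1}\bigr)\,\sin\bigl(\pi m_nq_n\alpha_{n+1}\bigr).
\end{equation*}
By Remark \ref{rem:aneigid} we have $m_nq_n\alpha_{n+1}=\tfrac12+k+q_n\Delta_n$ for some $k\in\mathbb{Z}$ with $|q_n\Delta_n|\le 1/q_n^6$. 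Thus $\sin(\pi m_nq_n\alpha_{n+1})=\pm\cos(\pi q_n\Delta_n)$ whose absolute value is $\ge\tfrac12$ once $q_n$ is large, and similarly $\cos(2\pi q_n\theta+\pi m_nq_n\alpha_{n+1})=\pm\sin(2\pi q_n\theta+\pi q_n\Delta_n)$.

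The core estimate is then the lower bound on $|\sin(2\pi q_n\theta+\pi q_n\Delta_n)|$ for $\theta\notin B_n$. By construction $\operatorname{dist}(2q_n\theta,\mathbb{Z})\ge q_n^{-1/2}$, so $\operatorname{dist}(2\pi q_n\theta,\pi\mathbb{Z})\ge\pi q_n^{-1/2}$, and subtracting the negligible perturbation $\pi|q_n\Delta_n|\le\pi q_n^{-6}$ still leaves $\operatorname{dist}(2\pi q_n\theta+\pi q_n\Delta_n,\pi\mathbb{Z})\ge\pi/(2q_n^{1/2})$. Applying the elementary inequality $|\sin x|\ge\tfrac{2}{\pi}\operatorname{dist}(x,\pi\mathbb{Z})$ yields $|\sin(2\pi q_n\theta+\pi q_n\Delta_n)|\ge q_n^{-1/2}$. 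Combining the two factors gives
\begin{equation*}
|\psi_n'(\theta)|\ge 4\pi q_n^3\cdot q_n^{-1/2}\cdot\tfrac12=2\pi q_n^{2.5}\ge q_n^{2.5},
\end{equation*}
as required.

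The second inequality is easier: differentiating twice gives $\psi_n''(\theta)=-4\pi^2 q_n^4\bigl(\cos(2\pi(q_n\theta+m_nq_n\alpha_{n+1}))-\cos(2\pi q_n\theta)\bigr)$, which by the triangle inequality is bounded in modulus by $8\pi^2 q_n^4\le 9\pi^2 q_n^4$ uniformly on $\mathbb{T}$. The only genuine difficulty in the whole argument is the first bound, and within it, keeping track of the small error $q_n\Delta_n$ without losing the factor $q_n^{-1/2}$ coming from the avoidance set $B_n$; the saving $|\Delta_n|\le q_n^{-7}$ afforded by condition \ref{eq:m} is exactly what makes this perturbation harmless.
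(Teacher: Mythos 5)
Your proof is correct, and all the quantitative inputs you use ($|\Delta_n|\le q_n^{-7}$ from Remark \ref{rem:aneigid}, the width $q_n^{-1.5}$ of the excluded neighbourhoods in $B_n$ translating into $\operatorname{dist}(2q_n\theta,\mathbb{Z})\ge q_n^{-1/2}$) are exactly the right ones. The organization differs from the paper's in a mild but genuine way: the paper decomposes $\psi_n$ \emph{additively}, writing $\psi_n(\theta)=-2q_n^2\cos(2\pi q_n\theta)+\sigma_n(\theta)$ where $\sigma_n$ compares the true shift $m_n\alpha_{n+1}$ with the ideal half-period shift $\tfrac{1}{2q_n}$, and then kills $\sigma_n'$ and $\sigma_n''$ (each bounded by $1$) via the mean value theorem; the lower bound $|\sin(2\pi q_n\theta)|\ge q_n^{-0.5}$ off $B_n$ is obtained from $\sin x\ge\tfrac12 x$ on $[0,\pi/2]$. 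You instead factor $\psi_n'$ \emph{multiplicatively} with the sum-to-product identity and bound each factor separately, absorbing the error $q_n\Delta_n$ as a small phase perturbation controlled by Jordan's inequality. Your route gives a slightly cleaner second-derivative bound ($8\pi^2q_n^4$ exactly, with no error term) and a marginally better constant in the first bound ($2\pi q_n^{2.5}$ versus the paper's $4\pi q_n^{2.5}-1$), at the cost of tracking the phase shift inside the sine rather than isolating it once and for all; the paper's additive splitting has the advantage that the same error function $\sigma_n$ serves both derivative estimates. Both arguments rest on the same two facts — $m_nq_n\alpha_{n+1}$ is within $q_n^{-6}$ of $\tfrac12$ modulo $1$, and $B_n$ is precisely the set where $\sin(2\pi q_n\theta)$ is small — so the difference is one of bookkeeping, not of substance.
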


\begin{proof}
Using the relation $\cos\left(2\pi q_n \theta +\pi\right) = - \cos\left(2 \pi q_n \theta\right)$ and the map  
\begin{equation*}
\sigma_n\left(\theta\right)=q^2_n \cdot \left(\cos\left(2 \pi q_n\left(\theta+m_n \alpha_{n+1}\right)\right)-\cos\left(2\pi q_n\left( \theta+\frac{1}{2q_n}\right)\right)\right)
\end{equation*}
we can write $\psi_n\left(\theta\right)= -2q^2_n \cdot \cos\left(2\pi q_n \theta\right)+\sigma_n\left(\theta\right)$. For this map $\sigma_n$ we compute:
\begin{equation*}
\sigma_n'\left(\theta\right)= 2\pi \cdot q^3_n \cdot \left(-\sin\left(2 \pi q_n \left(\theta+m_n \cdot \alpha_{n+1}\right)\right)+\sin\left(2 \pi q_n \left(\theta + \frac{1}{2q_n}\right)\right)\right). 
\end{equation*}
Applying the mean value theorem on the function $\varphi_a\left(\xi\right) \coloneqq -2 \pi \cdot q^3_n \cdot \sin\left(2 \pi q_n \xi\right)$ we obtain:
\begin{equation*}
\left|\sigma_n'\left(\theta\right)\right| \leq 2 \pi q^3_n \cdot \max_{\xi \in \mathbb{T}} \left|-2 \pi q_n \cdot \cos\left(2 \pi q_n \xi\right)\right| \cdot  \Delta_n \leq \left(2 \pi\right)^2 \cdot q^4_n \cdot \frac{q_n}{q_{n+1}} \leq \left(2 \pi\right)^2 \cdot q^4_n \cdot \frac{1}{q^7_{n}} < 1.
\end{equation*}
On the other hand, on the set $\mathbb{T} \setminus B_n$ it holds:
\begin{align*}
\inf_{\theta \in \mathbb{T} \setminus B_n} \left|\sin \left(2 \pi q_n \theta\right)\right| & = \inf_{\theta=\tilde{\theta}+\frac{k}{2q_n}, k \in \mathbb{Z}, \tilde{\theta} \in \left[\frac{1}{2q^{1.5}_n}, \frac{1}{2q_n}-\frac{1}{2q^{1.5}_n}\right]} \left|\sin \left(2 \pi q_n \theta\right)\right| \\
& = \inf_{\tilde{\theta} \in \left[\frac{1}{2q^{1.5}_n}, \frac{1}{2q_n}-\frac{1}{2q^{1.5}_n}\right]} \left|\sin \left(2 \pi q_n \tilde{\theta}\right)\right| \\
& = \inf_{\tilde{\theta} \in \left[\frac{1}{2q^{1.5}_n}, \frac{1}{4q_n}\right]} \left|\sin \left(2 \pi q_n \tilde{\theta}\right)\right| \geq \frac{1}{2} \cdot 2 \pi q_n \cdot \frac{1}{2q^{1.5}_n} = \frac{\pi}{2} \cdot q^{-0.5}_n > q^{-0.5}_n
\end{align*}
with the aid of the estimate $\sin(x)\geq \frac{1}{2}x$ for $x \in \left[0,\frac{\pi}{2}\right]$. Thus, we have:
\begin{equation*}
\inf_{ \theta \in \mathbb{T} \setminus B_n} \left| \psi_n'\left(\theta\right)\right| \geq 4 \pi q^3_n \cdot \inf_{\theta \in \mathbb{T} \setminus B_n} \left|\sin \left(2 \pi q_n \theta\right)\right| - \sup_{\theta \in \mathbb{T} \setminus B_n} \left|\sigma_n'\left(\theta\right)\right| \geq 4 \pi q^3_n \cdot q^{-0.5}_n -1 \geq q^{2.5}_n.
\end{equation*}
In order to estimate $\psi_n''$ we compute
\begin{equation*}
\sigma_n''\left(\theta\right)= \left(2\pi\right)^2 \cdot q^4_n \cdot \left(-\cos\left(2 \pi q_n\left( \theta+m_n \cdot \alpha_{n+1}\right)\right)+\cos\left(2 \pi q_n \left(\theta + \frac{1}{2q_n}\right)\right)\right)
\end{equation*}
and use the mean value theorem on $\varphi_b\left(\xi\right) \coloneqq -\left(2 \pi\right)^2 \cdot q^4_n \cdot \cos\left(2 \pi q_n \xi\right)$:
\begin{equation*}
\left|\sigma_n''\left(\theta\right)\right| \leq \left(2 \pi\right)^2 \cdot q^4_n \cdot \max_{\xi \in \mathbb{T}} \left| 2 \pi q_n \cdot \sin\left(2 \pi q_n \xi\right)\right|  \cdot \Delta_n \leq \left(2 \pi\right)^3 \cdot q^5_n \cdot \frac{q_n}{q_{n+1}} < 1.
\end{equation*}
Then we obtain:
\begin{equation*}
\sup_{\theta \in \mathbb{T} \setminus B_n} \left|\psi_n''\left(\theta\right)\right| \leq \sup_{\theta \in \mathbb{T} \setminus B_n} \left|2 \cdot \left(2 \pi\right)^2 \cdot q^4_n \cdot \cos\left(2 \pi q_n \theta\right)\right|+\sup_{\theta \in \mathbb{T} \setminus B_n} \left|\sigma_n''\left(\theta\right)\right| \leq 8 \pi^2 \cdot q^4_n + 1 \leq 9 \pi^2 \cdot q^4_n.
\end{equation*}
\end{proof}

The aimed standard partial decomposition $\eta_n$ of $\mathbb{T}^2$ is defined in the following way: \\
Let $\hat{\eta}_n = \left\{ \hat{I}_{n,i}\right\}$ be the partial partition of $\mathbb{T}\setminus B_n$ consisting of all the disjoint intervals $\hat{I}_{n,i}$ such that there is $k \in \mathbb{Z}$: $\psi_n\left(\hat{I}_{n,i}\right) = k + \left[0,1\right)$. Then we define
\begin{equation*}
\eta_n = \left\{ \hat{I} \times \left\{r\right\} \;:\; \hat{I} \in \hat{\eta}_n,\;r \in \mathbb{T}\right\}.
\end{equation*}
Note that we have $\pi_r\left(\Phi_n\left(I_n\right)\right) = \mathbb{T}$ for every $I_n \in \eta_n$.
\begin{lem} \label{lem:eta}
For any partition element $\hat{I}_n \in \hat{\eta}_n$ we have $\lambda\left(\hat{I}_n\right) \leq q^{-2.5}_n$. Moreover, it holds: $\eta_n \rightarrow \varepsilon$.
\end{lem}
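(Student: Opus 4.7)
The plan is to deduce both assertions directly from Lemma \ref{lem:psi}, which controls $\psi_n$ off the singular set $B_n$.

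For the length bound, the argument is immediate: each $\hat{I}_n \in \hat{\eta}_n$ is contained in $\mathbb{T} \setminus B_n$ and satisfies $\psi_n(\hat{I}_n) = k + [0,1)$ for some $k \in \mathbb{Z}$, so $\psi_n$ varies by exactly $1$ on $\hat{I}_n$. Combined with $\inf_{\theta \in \mathbb{T} \setminus B_n}|\psi_n'(\theta)| \geq q_n^{2.5}$, the mean value theorem gives $\lambda(\hat{I}_n) \leq q_n^{-2.5}$.

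For the convergence $\eta_n \to \varepsilon$, my strategy is first to show that $\bigcup_i \hat{I}_{n,i}$ exhausts $\mathbb{T}$ in measure as $n \to \infty$, and then to combine this with the diameter bound via a routine approximation argument. Directly, $\lambda(B_n) = 2q_n \cdot q_n^{-1.5} = 2q_n^{-0.5}$, and $\mathbb{T}\setminus B_n$ decomposes into $2q_n$ open components, each of length $\tfrac{1}{2q_n} - q_n^{-1.5}$. On each such component $\psi_n$ is strictly monotonic (since $|\psi_n'|$ is bounded away from zero) with total variation at least $q_n^{2.5} \cdot \bigl(\tfrac{1}{2q_n} - q_n^{-1.5}\bigr) \geq q_n^{1.5}/3$ for $n$ large. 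Hence one can greedily tile each component by disjoint elements of $\hat{\eta}_n$, i.e.\ subintervals on which $\psi_n$ varies by exactly $1$, leaving at the end at most one leftover subinterval of $\psi_n$-variation strictly less than $1$ and, by the same MVT bound, of $\theta$-length at most $q_n^{-2.5}$. Summing over the $2q_n$ components, the total discarded $\theta$-measure is at most $2q_n \cdot q_n^{-2.5} = 2q_n^{-1.5}$, so $\lambda\bigl(\bigcup_i \hat{I}_{n,i}\bigr) \geq 1 - 2q_n^{-0.5} - 2q_n^{-1.5} \to 1$.

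To conclude $\eta_n \to \varepsilon$, note that the elements of $\eta_n$ are of the form $\hat{I}_{n,i} \times \{r\}$ with vanishing $\theta$-diameter and collectively foliate a set of Lebesgue measure tending to $1$. A standard approximation argument---cover an arbitrary Borel set by a finite union of rectangles, then approximate each rectangle $[a,b]\times[c,d]$ by a union of segments $\hat{I}_{n,i} \times \{r\}$ with $r \in [c,d]$, using that $q_n^{-2.5} \ll b-a$ for $n$ large---shows that any Borel subset of $\mathbb{T}^2$ is well approximated by unions of elements of $\eta_n$. The only delicate step is the greedy tiling, as one must verify that the leftover in each component has $\theta$-length at most $q_n^{-2.5}$; this again follows from the lower bound on $|\psi_n'|$ applied to an interval of $\psi_n$-variation less than $1$. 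Everything else is routine bookkeeping.
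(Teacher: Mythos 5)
Your proof is correct and follows essentially the same route as the paper: the length bound via the mean value theorem from $\inf_{\mathbb{T}\setminus B_n}|\psi_n'|\geq q_n^{2.5}$, and convergence to the point partition by combining the vanishing diameters with the estimate $\lambda\bigl(\bigcup_i \hat{I}_{n,i}\bigr) \geq 1 - \lambda(B_n) - O(q_n\cdot q_n^{-2.5}) \to 1$. The only cosmetic discrepancy is that, since the elements of $\hat{\eta}_n$ are by definition preimages of the \emph{integer-aligned} intervals $k+[0,1)$, each monotonicity component leaves up to two leftover pieces (one at each end) rather than one, giving a total waste of $4q_n^{-1.5}$ instead of $2q_n^{-1.5}$ as in the paper's estimate; this changes nothing in the conclusion.
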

\begin{proof}
By Lemma \ref{lem:psi} we have $\inf_{ \theta \in \mathbb{T} \setminus B_n} \left| \psi_n'\left(\theta\right)\right| \geq q^{2.5}_n$. Therefore, $\lambda\left(\hat{I}_n\right)\leq q^{-2.5}_n$ for any $I_n \in \eta_n$. Hence, the length of the elements of $\eta_n$ goes to zero as $n \rightarrow \infty$. Thus, in order to prove $\eta_n \rightarrow \varepsilon$ we have only to check that the total measure of the partial decompositions $\eta_n$ goes to $1$ as $n \rightarrow \infty$. Since the elements of $\hat{\eta}_n$ are contained in $\mathbb{T} \setminus B_n$ and have to satisfy the additional requirement $\psi_n\left(\hat{I}_{n}\right) = k + \left[0,1\right)$ for $k \in \mathbb{Z}$, on both sides around the set $B_n$ there is an area without partition elements. So the total measure of $\eta_n$ can be estimated as follows:
\begin{align*}
\sum_{\hat{I}_n \in \hat{\eta}_n} \lambda\left(\hat{I}_n\right) & \geq 1 - \lambda\left(B_n\right) - 2 \cdot 2 q_n \cdot \max_{\hat{I}_n \in \hat{\eta}_n} \lambda\left(\hat{I}_n\right) \\
& \geq 1- 2q_n \cdot \left(q^{-1.5}_n + 2q^{-2.5}_n\right) > 1 - 3 \cdot q^{-0.5}_n
\end{align*}
and this approaches $1$ as $n \rightarrow \infty$.
\end{proof} 

\subsection{Application of the criterion for weak mixing}
In order to apply the criterion for weak mixing we check that the constructed diffeormorphism $f = \lim_{n\rightarrow \infty} f_n$ fulfil the requirements: 
\begin{itemize}
	\item By Lemma \ref{lem:conv}, 2., resp. \ref{lem:convc}, 2.: $d_0\left(f^{m_n}, f^{m_n}_n\right)< \frac{1}{2^n}$, because $m_n \leq q_{n+1}$ by definition.
	\item Because of the requirement \ref{eq:C1diff} resp. \ref{eq:d0c} on the number $q_n$ we have \ref{eq:C1}.
	\item By Lemma \ref{lem:eta} we have $\eta_n \rightarrow \varepsilon$ and the length of the horizontal interval is at most $q^{-2.5}_n$.
\end{itemize}
Finally, the next Lemma proves the last remaining property:
\begin{lem}
Let $I_n \in \eta_n$. Then $\Phi_n$ $\left(\frac{1}{nq_n},0, \frac{1}{n}\right)$-distributes $I_n$.
\end{lem}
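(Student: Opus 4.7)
The plan is to verify the three clauses of the $(\gamma,\delta,\varepsilon)$-distribution definition directly, using the explicit formula
\begin{equation*}
\Phi_n(\theta,r) = \bigl(\theta + m_n\alpha_{n+1},\; r + \psi_n(\theta)\bigr)
\end{equation*}
together with Lemma \ref{lem:psi} and Lemma \ref{lem:us}. Write $I_n = \hat{I}_n\times\{r_0\}$ for some $\hat{I}_n\in\hat{\eta}_n$ and $r_0\in\mathbb{T}$.

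First, $\delta=0$: by construction of $\hat{\eta}_n$ we have $\psi_n(\hat{I}_n) = k + [0,1)$ for some $k\in\mathbb{Z}$, so $\pi_r(\Phi_n(I_n)) = r_0 + \psi_n(\hat{I}_n)$ covers $\mathbb{T}$ entirely, giving $J=\mathbb{T}$ and $\lambda(J)=1$. Second, the horizontal span $\gamma$: since the first coordinate of $\Phi_n$ only translates $\theta$ by a constant, $\Phi_n(I_n)$ lies in the vertical strip $[c,c+\lambda(\hat{I}_n)]\times\mathbb{T}$ with $c= \inf\hat{I}_n + m_n\alpha_{n+1}$. By Lemma \ref{lem:eta} we have $\lambda(\hat{I}_n)\le q_n^{-2.5}$, so the strip has width $\le q_n^{-2.5}\le \frac{1}{nq_n}$ as soon as $q_n^{1.5}\ge n$, which is guaranteed by the growth conditions \ref{eq:C1diff}/\ref{eq:d0c} on $q_n$.

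The third clause is the main point. Note that because $\psi_n$ is strictly monotonic on $\hat{I}_n$ (its derivative has constant sign by Lemma \ref{lem:psi}) and $\psi_n(\hat{I}_n)$ has length $1$, the projection of $\psi_n(\hat{I}_n)$ onto $\mathbb{T}$ is a bijection. Given any interval $\tilde{J}\subseteq\mathbb{T}$, let $\tilde{J}'\subseteq\psi_n(\hat{I}_n)$ be the (at most two) corresponding lifted sub-intervals after the translation by $-r_0$. Then
\begin{equation*}
I_n\cap\Phi_n^{-1}(\mathbb{T}\times\tilde{J}) = \bigl(\hat{I}_n\cap\psi_n^{-1}(\tilde{J}')\bigr)\times\{r_0\},
\end{equation*}
and $\lambda(\tilde{J}')=\lambda(\tilde{J})$, $\lambda(\psi_n(\hat{I}_n))=1=\lambda(J)$. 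We now apply Lemma \ref{lem:us} to $\psi_n$ on $\hat{I}_n$ with $\varepsilon = 1/n$. Using Lemma \ref{lem:psi} and $\lambda(\hat{I}_n)\le q_n^{-2.5}$,
\begin{equation*}
\sup_{\theta\in\hat{I}_n}|\psi_n''(\theta)|\cdot\lambda(\hat{I}_n) \le 9\pi^2 q_n^{4}\cdot q_n^{-2.5} = 9\pi^2 q_n^{1.5},
\end{equation*}
while $\frac{1}{n}\cdot\inf_{\theta\in\hat{I}_n}|\psi_n'(\theta)|\ge \frac{1}{n}q_n^{2.5}$. The hypothesis of Lemma \ref{lem:us} therefore holds once $q_n\ge 9\pi^2 n$, which again follows from the growth conditions. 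Applying the lemma (and summing at most two contributions if $\tilde{J}'$ splits) yields
\begin{equation*}
\left|\frac{\lambda(I_n\cap\Phi_n^{-1}(\mathbb{T}\times\tilde{J}))}{\lambda(I_n)} - \frac{\lambda(\tilde{J})}{\lambda(J)}\right| \le \frac{1}{n}\cdot\frac{\lambda(\tilde{J})}{\lambda(J)},
\end{equation*}
establishing the third clause and completing the proof.

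The only delicate step is the identification of $\tilde{J}\subseteq\mathbb{T}$ with a subinterval (possibly two pieces) of the length-one image $\psi_n(\hat{I}_n)\subseteq\mathbb{R}$; everything else is a direct computation with the formulas for $\Phi_n$ and $\psi_n$ plus the bounds already established in Lemma \ref{lem:psi}.
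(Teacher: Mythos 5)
Your proof is correct and follows essentially the same route as the paper: $\delta=0$ from the construction of $\hat{\eta}_n$, $\gamma=\frac{1}{nq_n}$ from the interval length bound, and the third clause via Lemma \ref{lem:us} with the ratio $9\pi^2/q_n<1/n$ supplied by Lemma \ref{lem:psi}. Your extra care about $\tilde{J}$ possibly lifting to two subintervals of $\psi_n(\hat{I}_n)$ is a valid refinement of a point the paper passes over silently, and summing the two contributions preserves the relative error bound as you say.
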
 

\begin{proof}
The partial partition $\eta_n$ was chosen in such a way that $\pi_r\left(\Phi_n\left(I_n\right)\right) = \mathbb{T}$. Hence, $\delta$ can be taken equal to $0$. \\
Using the form of the lift $\Phi_n\left(\theta.r\right)= \left( \theta + m_n \cdot \alpha_{n+1}, r+ \psi_n\left(\theta\right)\right)$ we observe that $\Phi_n\left(I_n\right)$ is contained in the vertical strip $\left(I_n + m_n \cdot \alpha_{n+1}\right) \times \mathbb{T}$ for every $I_n \in \eta_n$. Since the length of $I_n$ is at most $\frac{1}{q^{2.5}_n} <\frac{1}{nq_n}$ by Lemma \ref{lem:eta}, we can take $\gamma = \frac{1}{nq_n}$. \\
Recall that an element $I_n \in \eta_n$ has the form $\hat{I} \times \left\{r\right\}$ for some $r \in \mathbb{T}$ and an interval $\hat{I} \in \hat{\eta}_n$ contained in $\mathbb{T} \setminus B_n$ with $\lambda\left(\hat{I}\right)\leq q^{-2.5}_n$ (see Lemma \ref{lem:eta}). Then Lemma \ref{lem:psi} implies the estimate
\begin{equation*}
\frac{\sup_{\theta \in \hat{I}} \left|\psi_n ''\left(\theta\right)\right| }{\inf_{\theta \in \hat{I}} \left|\psi_n '\left(\theta\right)\right|} \cdot \lambda\left(\hat{I}\right) \leq \frac{9 \pi^2 q^4_n}{q^{2.5}_n} \cdot q^{-2.5}_n = \frac{9\pi^2}{q_n} < \frac{1}{n}.
\end{equation*}
Then we can apply Lemma \ref{lem:us} on $\psi_n$ and $\hat{I}$ with $\varepsilon=\frac{1}{n}$. Moreover, we note that for any $\tilde{J} \subseteq J = \mathbb{T}$ the fact $\Phi_n\left(\theta,r\right) \in \mathbb{T} \times \tilde{J}$ is equivalent to $\psi_n\left(\theta\right) \in \tilde{J}-r \coloneqq \left\{ j-r \;:\;j \in \tilde{J}\right\}$. Combining these both results we conclude:
\begin{equation*}
\left|\frac{\lambda \left( I_n \cap \Phi^{-1}_n\left( \mathbb{T} \times \tilde{J}\right)\right)}{\lambda\left(I_n\right)}-\frac{\lambda\left(\tilde{J}\right)}{\lambda\left(J\right)}\right| = \left| \frac{\lambda\left(\hat{I} \cap \psi^{-1}_n\left(\tilde{J}-r\right)\right)}{\lambda\left(\hat{I}\right)} - \frac{\lambda\left(\tilde{J}\right)}{\lambda\left(J\right)} \right| \leq \frac{1}{n} \cdot \frac{\lambda\left(\tilde{J}\right)}{\lambda\left(J\right)}.
\end{equation*}
Thus, we can choose $\varepsilon = \frac{1}{n}$ in the definition of $\left(\gamma, \delta, \varepsilon\right)$-distribution.
\end{proof}

Then we can apply Proposition \ref{prop:wm} and conclude that the constructed diffeomorphisms are weak mixing.

\section{Proof of the Corollaries}
By some estimates we show that the assumptions of the Corollaries are enough to fulfil the requirements of the corresponding theorem.
 
\subsection{Proof of Corollary \ref{cor:Kor}} \label{subsection:prKor}
We recall the assumptions $\tilde{q}_1 \geq 108 \pi$ and $\tilde{q}_{n+1} \geq \tilde{q}^{\tilde{q}_n}_n$ on the sequence $\left(\tilde{q}_n\right)_{n \in \mathbb{N}}$. \\
\textbf{Claim: } Under these assumptions the numbers $\tilde{q}_n$ satisfy $\tilde{q}_n \geq 4 \pi \cdot n \cdot \left(n+2\right)^{n+2}$. \\
\textbf{Proof with the aid of complete induction: }
\begin{itemize}
	\item \textit{Start $n=1$: } $\tilde{q}_1 \geq 108 \pi = 4 \pi \cdot \left(1+2\right)^{1+2}$
	\item \textit{Assumption: } The claim is true for $n \in \mathbb{N}$.
	\item \textit{Induction step $n \rightarrow n+1$: } We calculate
	\begin{align*}
	\tilde{q}_{n+1} & \geq \tilde{q}^{\tilde{q}_n}_n \geq \left(4 \pi \cdot n \cdot \left(n+2\right)^{n+2}\right)^{4 \pi \cdot n \cdot \left(n+2\right)^{n+2}} \\
	& \geq 4 \pi \cdot \left(n \cdot \left(n+2\right)\right)^{\pi \cdot n \cdot \left(n+2\right)^{n+2}} \cdot \left(\left(n+2\right)^{n+1}\right)^{3 \pi \cdot n \cdot \left(n+2\right)^{n+2}} \\
	& \geq 4 \pi \cdot \left(n+1\right) \cdot \left(n+3\right)^{n+3},
	\end{align*}
	where we used the relation $\left(n+2\right)^{n+1} \geq n+3$ in the last step.
\end{itemize}
Hereby, we have
\begin{align*}
\tilde{q}_{n+1} & \geq \tilde{q}^{\tilde{q}_n}_n \geq \tilde{q}^{4 \pi \cdot n \cdot \left(n+2\right)^{n+2}}_n = \tilde{q}^{\pi \cdot n \cdot \left(n+2\right)^{n+2}}_n \cdot \tilde{q}^{3 \pi \cdot n \cdot \left(n+2\right)^{n+2}}_n \\
& \geq \left(4 \pi \cdot n \cdot \left(n+2\right)^{n+2}\right)^{\pi \cdot n \cdot \left(n+2\right)^{n+2}} \cdot \tilde{q}^{2 \cdot \left(n+2\right)^{n+2}}_n \\
& \geq \left(4 \pi \cdot n \cdot \left(n+2\right)!\right)^{\left(n+2\right)^{n+2}}\cdot \tilde{q}^{2 \cdot\left( \left(n+2\right) \cdot \left(n+1\right)^{n+1}+1\right)}_n \\
& \geq 2^n \cdot \left(2 \pi \cdot n \cdot \left(n+2\right)!\right)^{\left(n+2\right)^{n+1} \cdot \left(n+1\right)} \cdot \tilde{q}^{2 \cdot\left( \left(n+2\right) \cdot \left(n+1\right)^{n+1}+1\right)}_n \\
& \geq 2^n \cdot \left(2 \pi n\right)^{\left(n+2\right) \cdot \left(n+1\right)^{n+1}} \cdot \left(n+2\right)! \cdot \left(\left(n+2\right)!\right)^{\left(n+2\right)^{n-1} \cdot \left(n+1\right)} \cdot \tilde{q}^{2 \cdot\left( \left(n+2\right) \cdot \left(n+1\right)^{n+1}+1\right)}_n \\
& \geq \varphi_1\left(n\right)\cdot \tilde{q}^{2 \cdot\left( \left(n+2\right) \cdot \left(n+1\right)^{n+1}+1\right)}_n
\end{align*}
Hence, the requirement of Theorem \ref{theo:main} is met.

\subsection{Proof of Corollary \ref{cor:KorA}} \label{subsection:prKorA}
Let $\left(\tilde{q}_n\right)_{n \in \mathbb{N}}$ be a sequence satisfying $\tilde{q}_1 \geq \left(\rho+1\right) \cdot 2^7 \cdot \pi^2$ and $\tilde{q}_{n+1} \geq \tilde{q}^{15}_n \cdot \exp\left(\tilde{q}^7_n \cdot \exp\left(\tilde{q}^6_n\right)\right)$. Again we start with a proof by complete induction: \\
\textbf{Claim: } The numbers $\tilde{q}_n$ satisfy $\tilde{q}_n \geq 2^{n+6} \cdot n^2 \cdot \pi^2$. \\
\textbf{Proof: }
\begin{itemize}
	\item \textit{Start $n=1$: } By assumption we have: $\tilde{q}_1 \geq \left(\rho+1\right) \cdot 2^7 \cdot \pi^2 \geq 2^{1+6} \cdot 1^2 \cdot \pi^2$.
	\item \textit{Assumption: } The claim is true for $n \in \mathbb{N}$.
	\item \textit{Induction step $n \rightarrow n+1$: } We estimate
	\begin{equation*}
	\tilde{q}_{n+1} \geq \tilde{q}^{15}_n \cdot \exp\left(\tilde{q}^7_n \cdot \exp\left(\tilde{q}^6_n\right)\right) \geq \left(2^{n+6} \cdot n^2 \cdot \pi^2\right)^{15} \cdot \exp\left(\tilde{q}^7_n \cdot \exp\left(\tilde{q}^6_n\right)\right) \geq 2^{n+7} \cdot \left(n+1\right)^2 \cdot \pi^2.
	\end{equation*}
\end{itemize}
Then we have:
\begin{align*}
	\tilde{q}_{n+1} & \geq \tilde{q}^{15}_n \cdot \exp\left(\tilde{q}^7_n \cdot \exp\left(\tilde{q}^6_n\right)\right) \\
	& \geq  2^{n+6} \cdot n^2 \cdot \pi^2 \cdot \tilde{q}^{14}_n \cdot \exp\left(2^{n+6} \cdot n^2 \cdot \pi^2 \cdot \tilde{q}^6_n \cdot \exp\left(2^{n+6} \cdot n^2 \cdot \pi^2 \cdot \tilde{q}^5_n\right)\right) \\
	& \geq 2^n \cdot n^2 \cdot 64\pi^2 \cdot \tilde{q}^{14}_n \cdot \exp\left(4 \pi \cdot n \cdot \tilde{q}^6_n \cdot \exp\left(2\pi \cdot 2 \cdot n  \cdot \tilde{q}^5_n\right)\right).
\end{align*}
Thus, the condition of Theorem \ref{theo:main2} is fulfilled.

Philipp Kunde \\
University of Hamburg \\
Bundesstraße 55, 20146 Hamburg, Germany \\
Email: Philipp.Kunde@math.uni-hamburg.de
\end{document}